\def\draft{n}
\documentclass{amsart}[12pt]

\usepackage{latexsym,amssymb,graphicx,amscd,amsmath}

\def\printname#1{
	\if\draft y
		\smash{\makebox[0pt]{\hspace{-0.5in}
			\raisebox{8pt}{\tt\tiny #1}}}
	\fi
}

\def\lbl#1{\label{#1}\printname{#1}}

\newtheorem{thm}{Theorem}[section]
\newtheorem{lem}[thm]{Lemma}
\newtheorem{prop}[thm]{Proposition}
\newtheorem{cor}[thm]{Corollary} 
 
\newtheorem{rem}[thm]{Remark}
\newtheorem{rems}[thm]{Remarks}
\newtheorem{ex}[thm]{Example}

\newtheorem{question}[thm]{Question}

\newcommand{\BS}{{\mathcal{S}}}

\newcommand{\BZ}{{\mathbb{Z}}}
\newcommand{\BQ}{{\mathbb{Q}}}
\newcommand{\BR}{{\mathbb{R}}}

\newcommand{\BT}{{\mathbb{T}}}

\newcommand{\BC}{{\mathbb{C}}}

\newcommand{\BB}{{\mathcal{B}}}

\newcommand{\BH}{{\mathcal{H}}}

\newcommand{\BL}{{\mathcal{L}}}

\newcommand{\BF}{{\mathbb{F}}}

\newcommand{\Si}{{\Sigma}}

\newcommand{\bH}{{\mathcal{H}}}

\newcommand{\la}{{\lambda}}

\newcommand{\om}{{\omega}}
\newcommand{\si}{{\sigma}}

\newcommand{\al}{\alpha}

\newcommand{\fb}{\frak{b}}

\newcommand{\I}{{\mathrm I}}

\DeclareMathOperator{\End}{End}
\DeclareMathOperator{\Hom}{Hom}

\DeclareMathOperator{\Sp}{Sp}

\DeclareMathOperator{\SL}{SL}

\DeclareMathOperator{\GL}{GL}

\DeclareMathOperator{\SO}{SO}

\DeclareMathOperator{\even}{\, even}
\DeclareMathOperator{\odd}{\, odd}

\def\mapright#1{\smash{\mathop{\longrightarrow}\limits^{#1}}}

\setcounter{tocdepth}{1}

\begin{document}

\title[An application of TQFT to modular representation theory]{An application of TQFT to modular representation theory}
 
\author{ Patrick M. Gilmer}
\address{Department of Mathematics\\
Louisiana State University\\
Baton Rouge, LA 70803\\
USA}
\email{gilmer@math.lsu.edu}
\thanks{The first author was partially supported by  NSF-DMS-1311911}
\urladdr{www.math.lsu.edu/\textasciitilde gilmer/}

\subjclass[2010]{20C20, 20C33, 57R56} 

\author{Gregor Masbaum}
\address{Institut de Math{\'e}matiques de Jussieu (UMR 7586 du CNRS)\\
Case 247\\
4 pl. Jussieu\\
75252 Paris Cedex 5\\
FRANCE }
\email{gregor.masbaum@imj-prg.fr}
\urladdr{webusers.imj-prg.fr/~gregor.masbaum}

\date{April 24, 2017}

\begin{abstract} 
For $p\geq 5$ a prime, and $g\geq 3$ an integer, we use Topological Quantum Field Theory (TQFT) to study 
a family of $p-1$ highest weight modules $L_p(\lambda)$ for the symplectic group $\Sp(2g,K)$ where $K$ is an algebraically closed field of characteristic $p$. 
This permits explicit formulae for the
dimension and the formal character 
 of $L_p(\lambda)$ for these highest weights.
\end{abstract}

\maketitle
\tableofcontents

\section{Introduction} \lbl{sec.intro}

Let  
$p$ be an odd prime,
and $K$ be an algebraically closed field of characteristic $p$. For $g\geq 1$ an integer, we consider the symplectic group 
$\Sp(2g,K)$, 
thought of
as an algebraic group of rank $g$. 
It is well-known that
the classification (due to Chevalley) of rational simple $\Sp(2g,K)$-modules 
 is the same as in characteristic zero
(see  Jantzen  \cite[II.2]{J}).
 More precisely,  for every dominant weight $\lambda$ there is a simple module $L_p(\lambda)$, and these exhaust all isomorphism classes of simple modules. 
Here 
the set of dominant weights is the same as in characteristic zero:  $\lambda$ is dominant iff it is a linear combination of the fundamental weights $\om_i$ ($i=1,\ldots, g$) with nonnegative integer coefficients. 

On the other hand, 
while the dimensions of
simple 
  $\Sp(2g,\BC)$-modules can be computed from the Weyl character formula, it seems that explicit dimension formulae for 
the 
 modules $L_p(\la)$ for $p>0$ are quite rare, except in rather special situations. 
We refer to \cite{Hum} for a survey. For fundamental weights, Premet and Suprunenko \cite{PS} 
 gave an algorithm to compute the dimensions of $L_p(\om_i)$ by reducing 
the problem to known properties of symmetric group representations. 
Later, Gow \cite{G} gave an explicit construction of $L_p(\om_i)$ for the last $p-1$ fundamental weights (that is:  $\om_i$ where $i\geq g-p+1$) which allowed him to obtain a 
recursive formula for their dimensions. Even later, Foulle \cite{F} obtained  a dimension formula for all fundamental weights. 
As for 
 other 
weights, it is
 known  
that for weights $\lambda$ in the fundamental alcove the dimension of $L_p(\lambda)$ is the same as the dimension of $L_0(\la)$ (the corresponding simple module in characteristic zero), and can thus be computed by the Weyl character formula. But for weights outside the fundamental alcove, 
no general dimension formula 
is known. 
A conjectural formula by Lusztig for primes in a certain range was shown to hold for $p>>0$  by Andersen-Jantzen-Soergel \cite{AJS} but was recently  shown not to hold for all $p$ in the hoped-for range  by Williamson \cite{W}.

In this paper, we show that Topological Quantum Field Theory (TQFT) can give new information about 
the dimensions of 
some of these simple modules. Specifically,
we show that for every prime $p\geq 5$ and in every rank $g\geq 3$, there is a family of $p-1$ dominant 
weights $\la$, lying outside of the fundamental alcove except for one weight in rank $g=3$, for which we can express the dimension of $L_p(\la)$ by 
formulae similar to the Verlinde formula in TQFT. We found this family 
as a byproduct of Integral $\SO(3)$-TQFT
 \cite{Gi,GM1},  
an integral refinement of the  
 Witten-Reshetikhin-Turaev  
TQFT associated to $\SO(3)$.
More precisely, we use Integral $\SO(3)$-TQFT in what we call the `equal characteristic case' which we studied  in 
 \cite{GM3}.  The family of weights $\la$ we found together with our formulae for $\dim L_p(\la)$ is given in the following Theorem~\ref{1.1}. We can also compute the 
weight space decomposition of $L_p(\la)$ for these weights $\la$;   this will be given in Theorem~\ref{1.8}.
We follow the notation of \cite[Planche III]{B}, where the fundamental weights $\omega_i$ are expressed in the usual basis $\{\varepsilon_i\}$ ($i=1, \ldots, g$) of weights of the maximal torus as $\omega_i=\varepsilon_1 + \ldots + \varepsilon_i$.

\begin{thm}\lbl{1.1} Let $p\geq 5$ be prime and put $d=(p-1)/2$. For rank $g\geq 3$, consider the following $p-1$ dominant weights for the symplectic group $\Sp(2g,K)$ : \[\la =
\left\{\begin{array}{lll}
(d-1)\om_g &  &\ \ (Case \ \mathrm{I})\\
(d-c-1)\om_g + c \, \om_{g-1}& \mbox{ for $1\le c \le d-1$}  &\ \ (Case \ \mathrm{II})\\
(d-c-1)\om_g + (c-1)\om_{g-1} +  \om_{g-2} & \mbox{ for $1\le c \le d-1$}  &\ \ (Case \ \mathrm{III})\\
(d-2) \om_g +   \om_{g-3}& &\ \ (Case \ \mathrm{IV})\\
\end{array}\right. 
\]   
Put $\varepsilon= 0$ in Case $\mathrm I$ and $\mathrm {II}$ and $\varepsilon= 1$ in Case $\mathrm {III}$ and $\mathrm {IV}$. Then

\begin{equation}\lbl{diff} 
\dim 
L_p(\la)= \frac 1 2 \left(D_g^{(2c)}(p) + (-1)^\varepsilon\delta_g^{(2c)}(p)\right) \quad \text{where} \end{equation}
 \begin{equation}\lbl{Verl} 
D_g^{(2c)}(p)=\left( \frac p 4\right)^{g-1} \sum_{j=1}^{d}
\left(\sin \frac { \pi j(2c+1)}{p}\right) \left( \sin \frac { \pi j}{p} \right)^{1-2g}
\end{equation}  
\begin{equation}\lbl{d-Verl}  \delta_g^{(2c)}(p)=  (-1)^c \frac{4^{1-g}}{p}
  \sum_{j=1}^{d}  \left(\sin\frac{\pi j (2c+1)}{p}\right) \left(\sin\frac{\pi j}{p}\right) \left(\cos\frac{\pi j}{p} \right)^{-2g},
\end{equation}
and $c$ is the same $c$ used in the definition of $\lambda$, except in Case $\mathrm I$ and  $\mathrm {IV}$, where we put $c=0$. In Case $\mathrm{IV}$ in rank $g=3$, $\omega_{g-3}=\omega_0$ should be interpreted as zero.
\end{thm}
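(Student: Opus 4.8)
\emph{Proof strategy.} The plan is to realize each module $L_p(\la)$ in the list inside an Integral $\SO(3)$-TQFT module, in the equal characteristic setting of \cite{GM3}, and to compute its dimension by combining a Verlinde-type evaluation with a $3$-manifold invariant computation. For each integer $c$ with $0\le c\le d-1$, let $W_c$ denote the equal characteristic Integral $\SO(3)$-TQFT module attached to a genus $g$ surface $\Si_g$ carrying one marked point colored $2c$ (for $c=0$ this is just the closed surface $\Si_g$), base changed to $K$. By \cite{GM3}, $W_c$ is naturally a module over the algebraic group $\Sp(2g,K)$, with the Dehn twists of $\Mod(\Si_g)$ acting through the corresponding symplectic transvections. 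The first step of the proof is to recall from \cite{GM3} the explicit graph (``lollipop'') basis of $W_c$ associated to a handlebody $H_g$ with $\partial H_g=\Si_g$, together with the TQFT formulas for the action of Dehn twists and of a mapping class representing $-\Id\in\Sp(2g,K)$, since the subsequent steps are all carried out in this model.

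Second, I would compute $\dim_K W_c = D_g^{(2c)}(p)$. Since the rank of the integral TQFT lattice of \cite{GM3} is independent of the residue characteristic, $\dim_K W_c$ equals the dimension of the Witten--Reshetikhin--Turaev $\SO(3)$-module of $(\Si_g;2c)$; the Verlinde formula evaluates this to the right-hand side of (\ref{Verl}). Third, consider the involution $\iota$ of $W_c$ induced by a mapping class representing $-\Id\in\Sp(2g,K)$ (for instance the hyperelliptic involution, placing the marked point at a fixed point), and split $W_c=W_c^+\oplus W_c^-$ into its $\pm1$-eigenspaces. Since $-\Id$ is central in $\Sp(2g,K)$, it acts on any simple module $L_p(\mu)$ by the scalar $(-1)^{|\mu|}$, where $|\mu|$ is the sum of the coefficients of $\mu$ in the basis $\{\varepsilon_i\}$; so, once we know that $W_c^+$ and $W_c^-$ are simple, we will have $\tr(\iota\mid W_c)=\dim W_c^+-\dim W_c^-$. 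I would compute $\tr(\iota\mid W_c)$ as the Integral $\SO(3)$-TQFT invariant of the mapping torus of $\iota$, a Seifert fibered space over $S^2$ with $2g+2$ exceptional fibers of order $2$ together with a colored fiber coming from the marked point, using the surgery formula. This is a standard, if technical, Seifert-type computation; it yields $\delta_g^{(2c)}(p)$ of (\ref{d-Verl}) up to a definite sign which turns out to be $(-1)^\varepsilon$, the order-$2$ exceptional fibers being responsible for the replacement of $\sin$ by $\cos$ in the answer. Combining with the second step, $\dim W_c^\pm=\tfrac{1}{2}\bigl(D_g^{(2c)}(p)\pm\delta_g^{(2c)}(p)\bigr)$.

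It then remains to match $W_c^+$ and $W_c^-$ with the correct simple modules: for $c=0$ the weights of Cases I (eigenvalue $+1$) and IV (eigenvalue $-1$), and for $1\le c\le d-1$ those of Cases II and III. Simplicity of $W_c^\pm$ as $\Sp(2g,K)$-modules follows from the irreducibility of the equal characteristic TQFT representations in \cite{GM3}, together with the observation that all weights in the list are $p$-restricted, so that irreducibility over $\Sp(2g,\BF_p)$ and over $\Sp(2g,K)$ agree. To pin down the highest weights, I would exhibit a highest weight vector in each $W_c^\pm$ --- the basis vector supported on the ``largest'' admissible coloring of the lollipop graph compatible with the prescribed $\iota$-eigenvalue --- and compute its weight by letting a maximal torus of $\Sp(2g,K)$, written as a product of Dehn twists, act via the twist formulas recalled in the first step. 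Carrying this out in the four cases, which differ by the parity of the color at the end of the graph and by the combinatorics of admissible colorings near the last few vertices, reproduces exactly the weights of Cases I--IV with the stated values of $c$ and $\varepsilon$ (with $\om_0$ interpreted as $0$ when $g=3$).

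The \emph{main obstacle} is this last step: controlling the $\Sp(2g,K)$-module structure of the TQFT space precisely enough to establish simplicity of the two eigenspaces and to read off their exact highest weights. This is where the detailed combinatorics of the $\SO(3)$ graph basis, and the hypotheses $p\ge 5$ and $g\ge 3$ (needed for the relevant admissible colorings to exist and for both $W_c^+$ and $W_c^-$ to be nonzero and distinct), genuinely enter. The Seifert-fibered trace computation, although technically involved, is routine by the standards of quantum invariants, and the dimension count is bookkeeping once the machinery of \cite{GM3} is in place.
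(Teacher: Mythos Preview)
Your high-level plan --- realise the modules inside Integral TQFT and then identify their highest weights --- is indeed the paper's strategy, but several of your concrete steps rest on claims that are not available and in fact fail.

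The most serious issue is your opening assumption: the TQFT module $F_p(\Sigma_g(2c))$ (your $W_c$) is \emph{not} an $\Sp(2g,K)$-module, nor even an $\Sp(2g,\BZ)$-module. What \cite{GM3} actually provides is a short exact sequence $0\to F_p(g,c,1)\to F_p(\Sigma_g(2c))\to F_p(g,c,0)\to 0$ of $\Gamma_{g,1}$-representations (see~(\ref{ses})); the Torelli group acts on $F_p(\Sigma_g(2c))$ by block-\emph{unitriangular} matrices with a possibly nonzero off-diagonal block (Lemma~\ref{lemm-app-2}), so only the two composition factors descend to $\Sp(2g,\BF_p)$. Your eigenspace splitting by a lift $\tilde\iota\in\Gamma_{g,1}$ of $-\Id$ is therefore problematic: $\tilde\iota^{\,2}$ lies in the Torelli group and need not act as the identity, so $\tilde\iota$ need not be an involution on $W_c$ at all. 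The paper instead uses the purely combinatorial even/odd decomposition by the parity~(\ref{type}) of colorings, and the dimension formula~(\ref{diff}) comes from the cardinalities $|C_p(g,c,\varepsilon)|$ computed in \cite{GM3} by a direct count --- not from a mapping-torus trace, which over a field of characteristic $p$ would in any case only determine $\dim W_c^+-\dim W_c^-$ modulo $p$. The passage from $\Sp(2g,\BF_p)$ to $\Sp(2g,K)$ is then made \emph{abstractly} via Steinberg's restriction theorem; no algebraic-group action is ever constructed on the TQFT side.

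The highest-weight identification is also more delicate than your sketch allows. Individual Dehn twists map to unipotent transvections, not torus elements; and since the action on each factor $F_p(g,c,\varepsilon)$ factors through $\Sp(2g,\BF_p)$, at best one obtains the action of the \emph{finite} torus $\BT(\BF_p)$ (Lemma~\ref{elb1}), giving weights only modulo $p-1$. Lifting the reduced highest weight to an honest $p$-restricted weight requires an extra argument: the paper invokes Premet's theorem (Lemma~\ref{lem-Premet}) to characterise $\lambda$ by the condition that $\overline\lambda+\overline\alpha_i$ never lies in the reduced-weight multiset, together with a separate check that each coefficient congruent to $0$ mod $p-1$ is actually $0$ rather than $p-1$. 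In Case~I this root-addition criterion in fact fails (Remark~\ref{sons}), and one needs the additional input that the zero-coloring vector spans a $\BB(\BF_p)$-stable line (Lemma~\ref{elb2} and Corollary~\ref{2.4}). Finally, the candidate highest-weight vectors correspond to the \emph{smallest} colorings compatible with the given type (Figure~\ref{all}), not the largest. The obstacle you single out as ``main'' is real, but your proposed resolution of it would not go through as written.
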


\begin{rem}{\normalfont Formula (\ref{Verl}) 
is an instance of the famous Verlinde formula in TQFT.  Formula (\ref{d-Verl}) 
appeared first in \cite{GM3}. Note that the difference between the two formulae is that certain sines in (\ref{Verl}) have become cosines in (\ref{d-Verl}), and the overall prefactor is different. For fixed $g$, both $D_g^{(2c)}(p)$ and $\delta_g^{(2c)}(p)$ can be expressed as polynomials in $p$ and $c$. See \cite{GM3} for more information and further references. In Appendix~\ref{App-B}, we give explicit polynomial expressions for the dimensions of our $L_p(\la)$ in rank $g\leq 4$.
}\end{rem}

 \begin{rem}\lbl{1.6}{\normalfont 
Except for the weight  
      $\la=(d-2)\omega_3$ 
in Case $\mathrm{IV}$ in rank $g=3$, 
all the weights in the above list lie
outside of the fundamental alcove.
See Section~\ref{sec5} for more 
concerning
this.
}\end{rem}

\begin{rem}{\normalfont When $p=5$, the list above produces (in order) the fundamental weights $\om_{g}, \om_{g-1}, \om_{g-2}, \om_{g-3}$. These are exactly the weights considered by Gow \cite{G}.  For $p>5$, our weights are different from those of Gow. It is intriguing that both Gow's and our family of weights have $p-1$ elements.
}\end{rem}

\begin{question}{\normalfont Can one find similar  Verlinde-like dimension formulae for other families of 
dominant weights?
}\end{question}
\begin{rem}{\normalfont In \cite{GM4}, we answered this question affirmatively for the $p-1$ fundamental weights considered by Gow. But \cite{GM4} was based on Gow's recursion formula, not on TQFT as in the present paper. On the other hand, $\SO(3)$-TQFT is just 
one of the simplest TQFTs 
within the family of Witten-Reshetikhin-Turaev TQFTs, and it is conceivable that other Integral TQFTs 
({\em e.g.} \cite{CL2})
might produce more families of 
weights $\la$ where the methods of the present paper could be applied. A difficulty here is that Integral TQFT as we need it in this paper is so far not developed for other  TQFTs.
}\end{rem}

\begin{rems}\lbl{17s}{\normalfont  
(i) The restriction that $g\geq 3$ in the theorem is only to ensure that we get $p-1$ distinct weights. The theorem also holds in rank $g=1$ or $2$ for those weights $\la$ where it makes sense ({\em i.e.}, if no  $\om_i$ with $i<0$ appears in the formula for 
      $\la$) provided $\om_0$ is interpreted as zero.

(ii) Case $\mathrm{I}$ could be amalgamated with Case $\mathrm{II}$ in Theorem~\ref{1.1} by allowing $c$ to be zero in Case $\mathrm{II}$. We chose not to do this because 
Case $\mathrm{I}$ will require special treatment later.
}\end{rems}

Throughout the paper, 
we assume $p\geq 5$
and we use the notation $d=(p-1)/2$.

The construction of the modules $L_p(\lambda)$ goes as follows.  For $0\le c \le d-1$ and $\varepsilon \in \BZ/2$, we construct certain simple modules which we denote by $\widetilde F_p(g,c,\varepsilon)$. Note that there are $p-1$ choices of pairs $(c,\varepsilon)$. The construction of the modules $\widetilde F_p(g,c,\varepsilon)$ is based on results from Integral TQFT obtained in \cite{GM3}. From the TQFT description, we shall compute the dimension and weight space decomposition of $\widetilde F_p(g,c,\varepsilon)$. In particular, we shall compute the highest weight occuring in  $\widetilde F_p(g,c,\varepsilon)$, thereby identifying  $\widetilde F_p(g,c,\varepsilon)$ with one of the $L_p(\la)$ in Theorem~\ref{1.1}.

\begin{figure}[h]
\centerline{\includegraphics[width=1.8in]{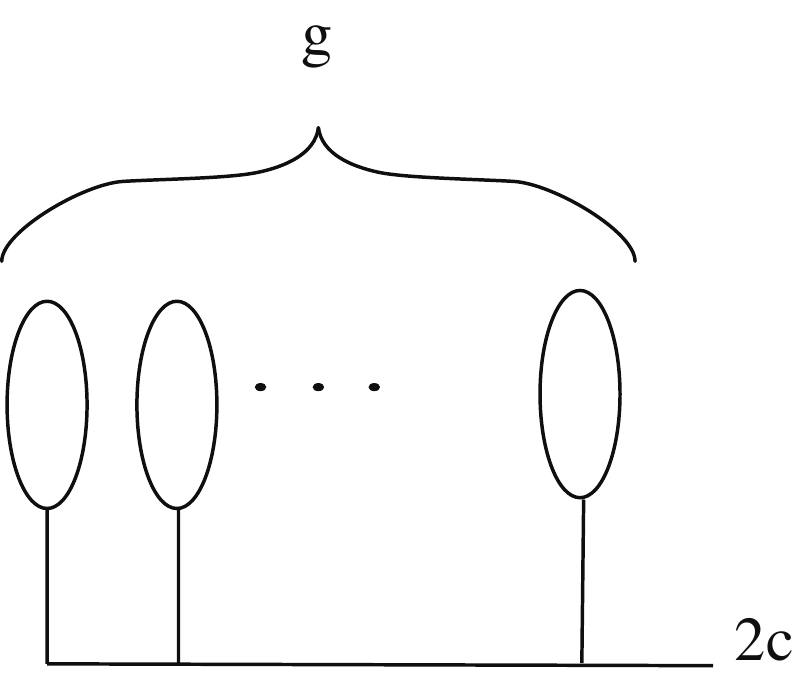}}
\caption{Lollipop tree 
$G_g$}\lbl{lol}
\end{figure}

Here is the construction of $\widetilde F_p(g,c,\varepsilon)$. We give a description which can be read without any knowledge of TQFT. Consider the graph $G_g$ depicted in Figure~\ref{lol} which we call a lollipop tree. It has $2g-1$ trivalent vertices and one univalent vertex which in the figure is labelled $2c$. The $2$-valent `corner'  vertex to the left of the figure should be ignored, and the two edges meeting there are to be considered a single edge.  Thus, $G_g$ has  $3g-1$ edges, $g$ of which are loop edges. The edges incident to a loop edge are called stick edges, and we refer to a loop edge together with its stick edge as a lollipop. 

A $p$-color is an integer $\in\{0,1,\ldots,p-2\}$. A $p$-coloring of $G_g$ is an assignment of $p$-colors to the edges of $G_g$. A $p$-coloring is 
{\em admissible}   
if whenever $i$, $j$ and $k$ are the colors of edges which meet at a vertex, then 
\begin{eqnarray*}\label{adm1} i+j+k &\equiv& 0 \pmod 2~,\\
\label{adm2} |i-j|\ \leq& k &  \leq \ i+j~, \text{\ \ and}\\
\label{adm3} i+j+k &\leq& 2p-4~. \end{eqnarray*} 
Admissibility at the trivalent vertex of the $i$-th lollipop implies that the stick edge has to receive an even color, which we denote by $2a_i$, and the loop edge has to receive a color of the form $a_i+b_i$, with $b_i \geq 0$. We denote the colors of the remaining edges by $c_1,c_2, \ldots$ as in Figure~\ref{lol2}, 
and we write an admissible $p$-coloring as $\sigma= (a_1, \ldots, a_g, b_1, \ldots, b_g, c_1, \ldots )$.

\begin{figure}[h]
 \centerline{\includegraphics[width=1.8in]{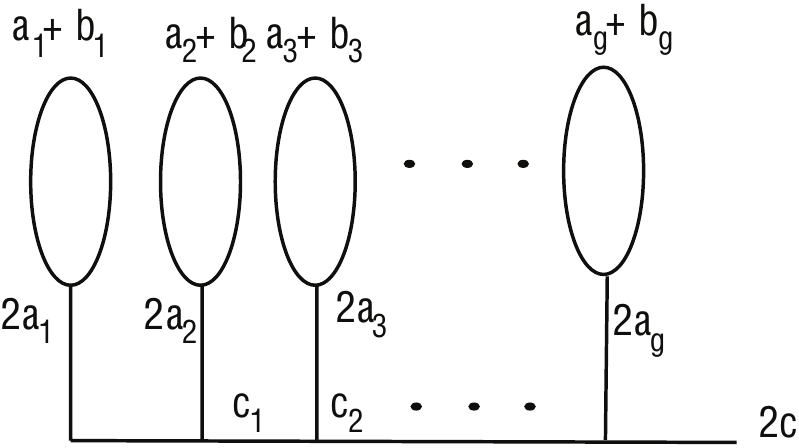}}
\caption{Colored Lollipop tree 
$G_{g}$
}\lbl{lol2}\end{figure}

A 
$p$-coloring
is of type $(c,\varepsilon)$ if the color $2c$ is assigned to the edge incident with the univalent vertex and if  
\begin{equation}\lbl{type}
c+\sum a_i \equiv \varepsilon \pmod 2~.
\end{equation}
A 
$p$-coloring
is 
{\em small}  
if the colors $a_i+b_i$ of the loop edges satisfy 
\begin{equation}\lbl{smallness}0\leq a_i+b_i\leq
d-1~.
\end{equation}

Let $C_p(g,c,\varepsilon)$ denote the set of 
small admissible  
$p$-colorings
of $G_g$ of type  $(c,\varepsilon)$. 
Let  $\BF_p$ denote the finite field with $p$ elements, and 
let $F_p(g,c,\varepsilon)$ be the $\BF_p$-vector space with basis $C_p(g,c,\varepsilon)$.

\begin{thm}\lbl{1.7} There is an irreducible representation of the finite symplectic group $\Sp(2g,\BF_p)$ on $F_p(g,c,\varepsilon)$.
\end{thm}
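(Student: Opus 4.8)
The plan is to extract the $\Sp(2g,\BF_p)$-module structure from Integral $\SO(3)$-TQFT and then prove irreducibility by induction on the genus $g$. First I would recall from \cite{GM3} that the Integral $\SO(3)$-TQFT at the prime $p$, taken in the equal characteristic case, assigns to a genus $g$ surface $\Sigma$ with one boundary circle colored $2c$ a free $\BF_p$-module with a basis indexed by the admissible $p$-colorings of the spine $G_g$ of a handlebody bounded by $\Sigma$, and that the mapping class group of $\Sigma$ acts $\BF_p$-linearly on this module. The key structural result of \cite{GM3} is that in equal characteristic this action --- which in general is only projective --- is genuinely linear and kills both the Torelli group and the level-$p$ congruence subgroup, so that it factors through $\Sp(2g,\BF_p)$. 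I would then check, again using \cite{GM3}, that the smallness condition (\ref{smallness}) and the type condition (\ref{type}) cut out an $\Sp(2g,\BF_p)$-invariant direct summand, namely $F_p(g,c,\varepsilon)$: the boundary color $2c$ is automatically preserved, the $\BZ/2$-grading by $\varepsilon$ reflects the behaviour of the $S$-matrix modulo $p$, and the smallness restriction reflects the splitting off, modulo $p$, of the ``large'' part of the lollipop basis.

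For irreducibility, I would argue by induction on $g$, using the TQFT cutting formula for the last lollipop. Cutting $\Sigma$ along a curve separating off the $g$-th handle exhibits an inclusion $\Sp(2(g-1),\BF_p)\subset\Sp(2g,\BF_p)$ under which $F_p(g,c,\varepsilon)$ restricts to a direct sum of ``genus $(g-1)$ blocks'', each of which, after relabelling, is one of the modules $F_p(g-1,c',\varepsilon')$ covered by the inductive hypothesis (the admissible colors of the $g$-th stick and loop edges index the blocks, and the global parity constraint (\ref{type}) pins down which $\varepsilon'$ occur). Given a nonzero $\Sp(2g,\BF_p)$-submodule $W$, the induction hypothesis forces $W$ to contain at least one entire block; one then has to propagate this to all blocks using elements of $\Sp(2g,\BF_p)$ realized by mapping classes that mix the $g$-th handle with the rest of the surface --- for instance the image of a Dehn twist along a curve running through the $g$-th handle, whose matrix in the coloring basis is controlled by a fusion/$S$-matrix computation reduced mod $p$. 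Establishing connectivity of the resulting ``block graph'', together with the base of the induction (small $g$, where one identifies $F_p(g,c,\varepsilon)$ with an explicitly understood representation, \emph{e.g.} a piece of a Weil-type representation of $\Sp(2g,\BF_p)$), completes the argument. The nondegenerate $\Sp(2g,\BF_p)$-invariant bilinear form on $F_p(g,c,\varepsilon)$ coming from the TQFT pairing reduced mod $p$ is a convenient bookkeeping device here: it makes $F_p(g,c,\varepsilon)$ self-dual, so it is enough to exhibit one cyclic vector and show that it lies in every nonzero submodule.

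The main obstacle is that the classical proof of irreducibility of $\SO(3)$-TQFT representations over $\BC$ --- distinct eigenvalues of the Dehn twists along the curves dual to the spine edges, combined with connectivity of the fusion graph on colorings --- collapses modulo $p$: there the twist eigenvalues reduce to $\pm 1$ (in fact to $(-1)^i$, or even to $1$, depending on the chosen reduction of the root of unity), so the coloring basis can no longer be separated by torus-type elements alone. This is precisely what forces the use of the genuinely larger group $\Sp(2g,\BF_p)$ and of the finer information about the TQFT representation recorded in \cite{GM3}; proving the needed mod-$p$ transitivity statement --- equivalently, the connectivity of the block graph in the inductive step --- is the heart of the proof.
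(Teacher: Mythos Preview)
Your proposal has a genuine structural error at the outset: the modules $F_p(g,c,\varepsilon)$ are \emph{not} $\Sp(2g,\BF_p)$-invariant direct summands of $F_p(\Sigma_g(2c))$. What \cite{GM3} establishes is only a short exact sequence
\[
0 \longrightarrow F_p(g,c,1) \longrightarrow F_p(\Sigma_g(2c)) \longrightarrow F_p(g,c,0) \longrightarrow 0
\]
of $\Gamma_{g,1}$-modules; the odd part is a submodule and the even part is the quotient. The decomposition $F_p(\Sigma_g(2c))=F_p(g,c,0)\oplus F_p(g,c,1)$ holds only as $\BF_p$-vector spaces. In particular your claim that ``the smallness condition and the type condition cut out an $\Sp(2g,\BF_p)$-invariant direct summand'' is false, and with it the self-duality argument via a nondegenerate invariant form on $F_p(g,c,\varepsilon)$: the Hopf pairing reduced mod $1-\zeta_p$ is precisely what degenerates and produces the nontrivial extension. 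Your inductive cutting argument for irreducibility is therefore being run on the wrong object; moreover, the paper explicitly cautions that cutting the lollipop tree at a stick edge does not produce a lollipop-tree basis for the smaller piece in the integral setting, so the block decomposition you envisage is more delicate than in the complex TQFT.

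The paper's route is quite different in emphasis. Irreducibility of the two factors $F_p(g,c,\varepsilon)$ as $\Gamma_{g,1}$-modules is simply quoted from \cite{GM3}; the work done here is to show that the action descends from $\Gamma_{g,1}$ to $\Sp(2g,\BF_p)$. This requires two ingredients: first, that the Torelli group ${\mathcal I}_{g,1}$ acts trivially on each factor (Lemma~\ref{lemm-app}), which is \emph{not} in \cite{GM3} and is given a self-contained proof in Appendix~\ref{App-A} using the realization of Torelli mapping cylinders by $Y_1$-surgery (Borromean/clasper surgery) together with a divisibility estimate for the resulting skein element; second, Mennicke's theorem that $\Sp(2g,\BF_p)$ is the quotient of $\Sp(2g,\BZ)$ by the normal closure of the $p$-th power of a transvection, combined with the TQFT fact that $p$-th powers of Dehn twists act trivially. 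So the hard step is the Torelli-triviality, not a new irreducibility argument.
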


This 
will be proved in Section~\ref{sec3}.

 Steinberg's restriction theorem (see {\em e.g.} \cite[2.11]{Hum}) 
 implies that there is a unique simple $\Sp(2g,K)$-module $\widetilde F_p(g,c,\varepsilon)$ characterized by the following two properties:

(i) The restriction of $\widetilde F_p(g,c,\varepsilon)$ to the finite group $\Sp(2g,\BF_p)$ is $F_p(g,c,\varepsilon) \otimes K$.

(ii) $\widetilde F_p(g,c,\varepsilon)$ has $p$-restricted highest weight. 

We recall that a dominant weight $\la= \sum_{i=1}^g \la_i \om_i$ is  { \it $p$-restricted} if, for each $1 \le i \le g$, we have
$0 \le  \la_i \le p-1$. 

Part (i) of the following theorem says that the $\widetilde F_p(g,c,\varepsilon)$ are precisely the simple modules $L_p(\lambda)$ listed in Theorem~\ref{1.1}. Part (ii) gives the weight space decomposition and thus determines the formal character of these modules. To state the result, let $\widetilde W_p(g,c,\varepsilon)$ be the multiset of weights occuring in $\widetilde F_p(g,c,\varepsilon)$. (By a multiset, we mean a set with multiplicities.)

\begin{thm} \lbl{1.8}(i) The $\Sp(2g,K)$-module $\widetilde F_p(g,c,\varepsilon)$ is isomorphic to $L_p(\lambda)$ where the highest weight $\lambda=\lambda_p(g,c,\varepsilon)$ is given by 
\begin{align*} \lambda_p(g,0,0) &= (d-1)\omega_g  &&\ \ \ \ \ \ (Case \ \mathrm{I})\\
 \lambda_p(g,c,0) &=(d-c-1)\om_g + c\, \om_{g-1}, &\mbox{$1\le c \le d-1$} &\ \ \ \ \ \ (Case \ \mathrm{II})\\
\lambda_p(g,c,1) &= (d-c-1) \om_g + (c-1) \om_{g-1} +  \om_{g-2}, &\mbox{ $1\le c \le d-1$} &\ \ \ \ \ \ (Case \ \mathrm{III})\\
\lambda_p(g,0,1)&=
(d-2) \om_g +   \om_{g-3} &&\ \ \ \ \ \ (Case \ \mathrm{IV})
\end{align*}

(ii) We have $$\widetilde W_p(g,c,\varepsilon) = \{w(\sigma)\, \vert \, \sigma\in C_p(g,c,\varepsilon)\}~,$$ 
where 
the weight of a coloring $\sigma= (a_1, \ldots, a_g, b_1, \ldots, b_g, c_1, \ldots )$ is 
\begin{equation}\lbl{defw}
w(\sigma)= \sum_{i=1}^g (d-1 -a_i-2 b_i) \varepsilon_i~.
\end{equation}
\end{thm}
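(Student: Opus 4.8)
The plan is to work entirely within Integral $\SO(3)$-TQFT in the equal characteristic case, as developed in \cite{GM3}, and to read off both statements from the explicit lollipop-tree basis of the relevant module. First I would recall from \cite{GM3} that the genus-$g$ handlebody, together with the extra structure pinning down the univalent edge colored $2c$, carries a natural $\BF_p[\Sp(2g,\BZ)]$-module built on admissible colorings of $G_g$, and that in the equal characteristic case the ``small'' admissibility condition (\ref{smallness}) cuts out a submodule (or subquotient) whose image is exactly $F_p(g,c,\varepsilon)$; Theorem~\ref{1.7} already gives that $F_p(g,c,\varepsilon)$ is irreducible over $\Sp(2g,\BF_p)$. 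The key point I need from the TQFT side is the action of the torus: the mapping classes corresponding to Dehn twists along the meridians of the $g$ loop edges act diagonally in the coloring basis, and the eigenvalue on a coloring $\sigma$ is governed by the color $a_i+b_i$ of the $i$-th loop edge via the standard twist coefficient $\mu_{a_i+b_i}$. Translating this diagonal action into the weight lattice of $\Sp(2g,K)$ — using the identification of the diagonal torus of $\Sp(2g,\BF_p)$ with twists along the $g$ loops — is what produces the formula (\ref{defw}): the $i$-th coordinate of $w(\sigma)$ in the $\varepsilon_i$-basis is the exponent $d-1-a_i-2b_i$ coming from $\mu_{a_i+b_i}$ up to the overall normalization fixing the $p$-restricted representative.

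With the weight-space decomposition (ii) in hand, part (i) follows by locating the highest weight. Concretely, I would order the weights $w(\sigma)$ and show that a unique coloring $\sigma_{\max}$ maximizes $w(\sigma)$ in the dominance order: this is the coloring where each loop edge is as small as the type and admissibility constraints permit, i.e. where the $a_i+b_i$ are pushed to their minimum subject to (\ref{type}) and the trivalent admissibility inequalities at the lollipops and along the spine. For Cases I and II ($\varepsilon=0$) the minimal configuration forces all $b_i=0$ and $a_i=0$ except possibly near the univalent end, giving highest weight supported on $\om_g$ and $\om_{g-1}$; for Cases III and IV ($\varepsilon=1$) the parity condition (\ref{type}) forces exactly one extra unit in the colors, which propagates to a single $\om_{g-2}$ (Case III) or $\om_{g-3}$ (Case IV) contribution. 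I would compute $\lambda_p(g,c,\varepsilon)$ from $w(\sigma_{\max})$ by expanding in the fundamental weights $\om_i=\varepsilon_1+\cdots+\varepsilon_i$ and checking it matches the list in Theorem~\ref{1.1}; since $\widetilde F_p(g,c,\varepsilon)$ is by definition the simple module with $p$-restricted highest weight restricting to $F_p(g,c,\varepsilon)$, and Steinberg's restriction theorem guarantees uniqueness, exhibiting this highest weight identifies $\widetilde F_p(g,c,\varepsilon)\cong L_p(\lambda_p(g,c,\varepsilon))$. One also has to confirm that $\lambda_p(g,c,\varepsilon)$ is indeed $p$-restricted, which is immediate from $c\le d-1<p-1$.

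The main obstacle, I expect, is the bookkeeping in the middle step: verifying that the diagonal torus action coming from the TQFT twists matches the algebraic-group torus action on the correct copy of $\Sp(2g,\BF_p)$, with the right normalization so that the resulting weights are the $p$-restricted ones rather than a translate. This requires carefully tracking how the standard generators of $\Sp(2g,\BZ)$ act on the skein module of the handlebody in the basis dual to the lollipop colorings, and matching Dehn twists to root-subgroup elements; the sign $(-1)^\varepsilon$ and the parity constraint (\ref{type}) have to be shown to be exactly what distinguishes the two summands in (\ref{diff}), i.e. one must see that the type-$(c,\varepsilon)$ condition is preserved by the $\Sp(2g,\BF_p)$-action and cuts $C_p(g,c,\varepsilon)$ out as a genuine invariant subset. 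Once the torus action is correctly identified, both (i) and (ii) are essentially a matter of combinatorial optimization over admissible colorings, and the dimension formula (\ref{diff}) of Theorem~\ref{1.1} follows by counting $|C_p(g,c,\varepsilon)|$, which is a Verlinde-type state sum.
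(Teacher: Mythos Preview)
Your outline correctly identifies the TQFT input: the Dehn twists along the loop meridians act diagonally in the coloring basis with eigenvalue $x^{d-1-a_i-2b_i}$ on $\hat\fb_\sigma$ for $x\in\BF_p^\star$ (this is Lemma~\ref{elb1}). But there is a genuine gap in the logic. You propose first to establish the weight decomposition (ii) and then to read off (i) by maximizing $w(\sigma)$ in dominance order. The difficulty is that the TQFT only furnishes an $\Sp(2g,\BF_p)$-module; the algebraic-group module $\widetilde F_p(g,c,\varepsilon)$ is defined \emph{abstractly} via Steinberg's theorem and is not itself constructed by the TQFT. Hence the torus calculation only yields the \emph{reduced} weights $\overline w(\sigma)\in X(\BT(\BF_p))\cong(\BZ/(p-1)\BZ)^g$, and (ii) is precisely what remains to be proved. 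Your remark about a ``normalization fixing the $p$-restricted representative'' does not close this: $p$-restrictedness is a condition on the highest weight alone, and there is no canonical lift of an arbitrary reduced weight to $X(\BT)$. For instance, in Case~I nothing you have said rules out the highest weight being $(d-1)\omega_g+(p-1)\omega_1$ rather than $(d-1)\omega_g$; both are $p$-restricted and have the same reduction mod $p-1$.

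The paper runs the argument in the opposite order. One works modulo $p-1$ throughout and invokes Premet's theorem (that for $p>2$ and $p$-restricted $\la$ the weight \emph{set} of $L_p(\la)$ coincides with its characteristic-zero analogue) in two places: first (Lemma~\ref{4.1}) to recognize $\overline\la$ as the unique reduced weight $\overline w(\sigma)$ with $\overline w(\sigma)+\overline\alpha_i\notin W_p(g,c,\varepsilon)$ for all $i$, and second (Lemma~\ref{4.4}) to resolve the $0$-versus-$(p-1)$ ambiguity in each $\omega_i$-coefficient of $\la$. Only after $\la$ is shown to lie in $[0,d-1]^g$ does one know that all weights of $\widetilde F_p$ lie in $[1-d,d-1]^g$ (convex hull of the Weyl orbit of $\la$), where reduction mod $p-1=2d$ is injective; this is what finally yields (ii). There is a further subtlety in Case~I that your sketch does not anticipate: the mod-$(p-1)$ criterion of Lemma~\ref{4.1} actually \emph{fails} for $\sigma_0$ (see Remark~\ref{sons}), so a separate argument is needed---namely that $\hat\fb_{\sigma_0}$ spans the unique $\BB(\BF_p)$-fixed line (Lemma~\ref{elb2} and Corollary~\ref{2.4}, relying on the Carter--Lusztig uniqueness result \cite[Theorem~7.1]{CL}).
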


\begin{ex}{\normalfont In Case $\mathrm{I}$, the highest weight  corresponds to the $p$-coloring $\sigma_0$ where all edges are colored zero. Indeed, formula (\ref{defw}) gives $$w(\sigma_0)=\sum_{i=1}^g (d-1) \varepsilon_i = (d-1)\om_g~. $$ In the other cases, the coloring $\sigma_0$ is not allowed as it is not of type $(c,\varepsilon)$ for $(c,\varepsilon)\not=(0,0)$. We shall describe the colorings corresponding to the highest weights in Case $\mathrm{II}-\mathrm{IV}$ in Section~\ref{sec4}. 
}\end{ex} 

\begin{rem}{\normalfont The $p=5$ case of Theorem~\ref{1.8} answers affirmatively  the question raised in \cite[p.~ 257 (after Theorem 8.1)]{GM3} (see also \cite[p.~83 (after Corollary 3)]{GM4}). 
}\end{rem}

The remainder of this paper is organized as follows. In Section~\ref{sec2}, we formulate two results (Lemma~\ref{elb1} and Lemma~\ref{elb2}) about the $\Sp(2g,\BF_p)$-modules  $F_p(g,c,\varepsilon)$. In Section~\ref{sec3},  we review the construction of $F_p(g,c,\varepsilon)$ and the proof of Theorem~\ref{1.7}, and then prove Lemma~\ref{elb1} and Lemma~\ref{elb2} using further arguments from TQFT. In Section~\ref{sec4}, we prove Theorems~\ref{1.1} and~\ref{1.8}. The only results from TQFT that will be used in the proof of these two theorems are those stated in Section~\ref{sec2}. Finally, in Section~\ref{sec5}, we make a few further comments and discuss the rank 3 case as an example.

\medskip\noindent{\bf Acknowledgements.} We thank Henning H. Andersen for helpful discussions. He  suggested checking our results against the Jantzen Sum Formula in the rank $3$ case (see Section~\ref{sec5}) and showed us how to do it.  G. M. thanks the Mathematics Department of Louisiana State University, Baton Rouge,   the Centre for Quantum Geometry of Moduli Spaces, Aarhus, Denmark, and the Max Planck Institute for Mathematics, Bonn, Germany, for hospitality while part of this paper was written.   
P. G. 
also thanks  the Max Planck Institute for Mathematics for hospitality. Last but not least, we thank the referee for his insightful comments.
\section{Two lemmas}\lbl{sec2} 

We begin by fixing some notation. 
For $k$ any of the rings $\BZ$, $\BF_p$, or $K$, we take $\Sp(2g,k)$ to be the subgroup of $\GL(2g,k)$ consisting of  isometries of  the  skew symmetric form given by the matrix 
$J_g =\left[\begin{smallmatrix} 0&I_g \\ -I_g&0 \end{smallmatrix} \right]$. 
Let $\BT$ be the maximal torus of $\Sp(2g,K)$ given by the diagonal matrices of $\Sp(2g,K)$. For $1\le i \le g$ and $ x \in K^*$,  let $T_{x, i}$ denote the diagonal matrix with $x$ on the $ i$th  diagonal entry, $x^{-1}$ on the $(g+ i)$-th  diagonal entry, and 
$1$'s 
elsewhere on the diagonal. We have an isomorphism $$(K^*)^g \mapright\approx \BT, \ \ \ \ (x_1, \ldots x_g) \mapsto \prod_{i=1}^g T_{x_i,i }~.$$ We denote by $\{\varepsilon_i\}_{i=1, \ldots, g}$ the standard basis of the weight lattice $$ X(\BT)= \Hom( \BT, K^*) \approx \bigoplus_{i=1}^g \Hom(K^*,K^*) \approx \BZ^g$$ where $\varepsilon_i(T_{x,i})=x$ and  $\varepsilon_i(T_{x,j})=1$ for $j\neq i$. 

We also let $\BB(K)$ denote the Borel subgroup of $\Sp(2g,K)$ which is the group of block matrices of the form
$$\begin{bmatrix} A&B \\ 0&(A^t)^{-1} \end{bmatrix}$$ where $A$ is  an invertible upper triangular matrix and $B$ satisfies $A B^t= B A^t.$

 Recall that $F_p(g,c,\varepsilon)$ is a representation of the finite symplectic group $\Sp(2g,\BF_p)$ on  the $\BF_p$-vector space with basis $C_p(g,c,\varepsilon)$. Let $\hat \fb_\sigma$ denote the basis vector correponding to the coloring $\sigma\in C_p(g,c,\varepsilon)$. Since the finite field $\BF_p$ is a subfield of $K$, we may consider the actions of the finite maximal torus 
  $\BT(\BF_p)=\BT\cap \Sp(2g,\BF_p)$
  and of the finite Borel subgroup 
  $\BB(\BF_p)=\BB(K) \cap  \Sp(2g,\BF_p)$
   on $F_p(g,c,\varepsilon)$. The following Lemma says that the basis vectors  $\hat \fb_\sigma$ are in some sense `weight vectors' for $\BT(\BF_p)$.

\begin{lem}\lbl{elb1} Each basis vector $\hat \fb_{\sigma}$ is a simultaneous eigenvector for the commuting operators $\{T_{x, i}\}_{i=1, \ldots, g}$ for $x\in\BF_p^{\star}$, with eigenvalues given by $$T_{x, i}(\hat \fb_\sigma) = x^{d-1-a_i-2b_i} \hat \fb_{\sigma}$$ where $\sigma= (a_1, \ldots, a_g, b_1, \ldots, b_g, c_1, \ldots )$.
\end{lem}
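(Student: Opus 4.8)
The plan is to read off the torus action from the TQFT description of $F_p(g,c,\varepsilon)$ to be set up in Section~\ref{sec3}: I would localize the assertion to a single lollipop via the gluing axiom and then carry out an explicit $\SL(2,\BF_p)$-computation in the genus-one module reduced mod $p$. Recall that Section~\ref{sec3} realizes $F_p(g,c,\varepsilon)$ as the reduction mod $p$ of an integral lattice (from \cite{GM3}) inside the $\SO(3)$-TQFT vector space of a genus-$g$ surface $\Si$ with a marked point colored $2c$; the lollipop tree $G_g$ is a spine of a handlebody bounded by $\Si$, and $\hat \fb_\sigma$ is the reduction of the basis vector indexed by the small admissible coloring $\sigma$. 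The group $\Sp(2g,\BF_p)$ acts because the mapping class group action on this reduced module factors through the mod $p$ symplectic representation; this factoring is part of the construction recalled in Section~\ref{sec3} and underlies Theorem~\ref{1.7}. Under it, $T_{x,i}$ is the image of a mapping class supported in the $i$-th handle $\Si_i\subset\Si$ — a once-punctured torus containing the $i$-th lollipop — which on $H_1(\Si_i;\BF_p)$ acts as $\mathrm{diag}(x,x^{-1})$ in the basis given by a meridian (the curve dual to the loop edge) and a longitude.

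First I would apply the gluing axiom to the separating curve $\partial\Si_i$. This writes $\hat \fb_\sigma$, up to the standard normalizing scalars, as a vector assembled from a genus-one vector in the TQFT module of $\Si_i$ — determined by the colors $a_i,b_i$ of the $i$-th lollipop, with the marked point of $\Si_i$ colored by the color $2a_i$ of the stick edge — together with a vector depending only on the colors of $\sigma$ away from the $i$-th lollipop; and a mapping class supported in $\Si_i$ acts as the identity on the second factor. Hence it suffices to compute, in the mod $p$ genus-one module with marked point colored $2a_i$, the action of $\mathrm{diag}(x,x^{-1})\in\SL(2,\BF_p)$ in the standard basis indexed by the loop color $n_i=a_i+b_i$. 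One way to do this is to write $\mathrm{diag}(x,x^{-1})$ as an explicit word in the $\SL(2,\BZ)$-generators $T$ (Dehn twist along the meridian) and $S$ — for instance $\mathrm{diag}(x,x^{-1})=T^{a}\,S^{-1}T^{b}S\,T^{a}\,S$ with integers $a\equiv x$ and $b\equiv x^{-1}$ modulo $p$ — and to evaluate using the mod $p$ $S$- and $T$-matrices of $\SO(3)$-TQFT; alternatively, one reads the answer off the explicit mod $p$ $\SL(2,\BF_p)$-action recorded in \cite{GM3}. Since $T$ is diagonal in the loop-color basis, with eigenvalue the twist coefficient attached to the color $n_i$, the substance of the computation is that after the equal-characteristic reduction the off-diagonal contributions of the $S$-factors cancel on $F_p(g,c,\varepsilon)$, leaving the diagonal operator whose entry on the basis vector with loop color $n_i=a_i+b_i$ equals $x^{\,d-1-a_i-2b_i}$.

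Assembling these computations over $i=1,\dots,g$ — the mapping classes involved being supported in the disjoint subsurfaces $\Si_i$, hence commuting — then shows that each $\hat \fb_\sigma$ is a simultaneous eigenvector for the $T_{x,i}$ with the stated eigenvalues. I expect the genus-one step of the previous paragraph to be the main obstacle: it needs the precise mod $p$ form of the $S$- and $T$-data, together with care about the framing anomaly (the central-charge scalar), which is exactly the point where passing to equal characteristic is essential. That the eigenvalue must be $x^{\,d-1-a_i-2b_i}$ is, however, clear in advance: it is the restriction to the finite torus $\BT(\BF_p)$ of the weight $w(\sigma)=\sum_{j}(d-1-a_j-2b_j)\varepsilon_j$ of~(\ref{defw}), so Lemma~\ref{elb1} is the finite-group shadow of the weight-space description in Theorem~\ref{1.8}(ii).
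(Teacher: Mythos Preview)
Your strategy matches the paper's: localize to the $i$-th lollipop via factorization along the separating curve $\partial\Si_i$, then do an explicit $\SL(2,\BF_p)$-computation in genus one. But there is a genuine gap in your gluing step. The factorization isomorphism you invoke holds for TQFT over a field; here, however, you are working with the reduction mod $h=1-\zeta_p$ of the \emph{integral} lattice $\BS_p(\Si_g(2c))$, and the integral gluing map
\[
\bigoplus_{a_i} \BS_p(\Si_1(2a_i)) \otimes \BS_p(\Si_{g-1}(2a_i,2c)) \longrightarrow \BS_p(\Si_g(2c))
\]
is injective but \emph{not} surjective (see \cite[\S 11]{GM1}). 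So it does not induce an isomorphism mod $h$, and your tensor splitting of $\hat\fb_\sigma$ is not automatic. The paper repairs this using \cite[Theorem~11.1]{GM1}: for a suitable lollipop basis $\{\mathfrak b_\nu^{(a_i)}\}$ of the genus-$(g-1)$ lattice, the image of each summand $\BS_p(\Si_1(2a_i))\otimes \mathfrak b_\nu^{(a_i)}$ is a power of $h$ times a direct summand of $\BS_p(\Si_g(2c))$, so the action of $T_{x,i}$ on the mod-$h$ reduction of that summand can after all be read off from the genus-one action. There is a further subtlety (Remark~\ref{cauti}): this basis $\{\mathfrak b_\nu^{(a_i)}\}$ cannot be obtained by simply cutting $G_g$ at the $i$-th stick, since the remaining graph is not a lollipop tree.

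For the genus-one step, your $S,T$-word approach is workable in principle but vague as stated (and the particular word $T^aS^{-1}T^bST^aS$ does not give a diagonal matrix), and the asserted ``off-diagonal cancellation'' is exactly what needs proving. The paper sidesteps this entirely: it invokes the explicit intertwiner $\Phi$ of \cite[\S 5]{GM2} (not \cite{GM3}) identifying $F_p(\Si_1(2a_i))$ with the $\SL(2,\BF_p)$-module of degree-$(d-a_i-1)$ homogeneous polynomials in $X,Y$. There $\left[\begin{smallmatrix} x & 0\\ 0 & x^{-1}\end{smallmatrix}\right]$ visibly sends $X^{d-a_i-1-b}Y^b$ to $x^{\,d-1-a_i-2b}X^{d-a_i-1-b}Y^b$, and $\Phi$ carries this monomial to a multiple of the basis vector with loop color $a_i+b$. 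No anomaly bookkeeping or cancellation argument is needed.
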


Note that the exponents are meaningful only modulo $p-1$, as $\BF_p^{\star}$ is a cyclic group of order $p-1$. We may interpret the collection of these exponents for a given basis vector $\hat \fb_{\sigma}$ as specifying a reduced weight, by which we mean an  element of 
$$ X(\BT(\BF_p))= X(\BT)\otimes \BZ/(p-1)\BZ \approx (\BZ/(p-1)\BZ)^g ~.$$ 
Let $W_p(g,c,\varepsilon)$ be the multiset of reduced weights occuring in $F_p(g,c,\varepsilon)$. The following is an immediate corollary of Lemma~\ref{elb1}. 

\begin{cor}\lbl{2.2} We have $W_p(g,c,\varepsilon) = \{\overline w(\sigma)\, \vert \, \sigma\in C_p(g,c,\varepsilon)\}~,$ where $\overline w(\sigma)$ is the reduction modulo $p-1$ of $w(\sigma)$ as defined in (\ref{defw}).
\end{cor}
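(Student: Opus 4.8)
The plan is to read off the statement directly from Lemma~\ref{elb1}, the only extra input being that the $\hat\fb_\sigma$, $\sigma\in C_p(g,c,\varepsilon)$, form a basis of $F_p(g,c,\varepsilon)$.

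First I would unwind the identifications. Under $X(\BT)\approx\BZ^g$ via the basis $\{\varepsilon_i\}$, a character of the finite torus $\BT(\BF_p)$ that sends $T_{x,i}\mapsto x^{n_i}$ for all $1\le i\le g$ and all $x\in\BF_p^\star$ corresponds, in $X(\BT(\BF_p))=X(\BT)\otimes\BZ/(p-1)\BZ$, to the class of $\sum_{i=1}^g n_i\varepsilon_i$ modulo $p-1$; here one uses that $\BT(\BF_p)$ is generated by the $T_{x,i}$ and that $\BF_p^\star$ is cyclic of order $p-1$, so each exponent $n_i$ is well-defined only modulo $p-1$, exactly as in the definition of a reduced weight. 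Now Lemma~\ref{elb1} says that for $\sigma=(a_1,\dots,a_g,b_1,\dots,b_g,c_1,\dots)\in C_p(g,c,\varepsilon)$ the basis vector $\hat\fb_\sigma$ satisfies $T_{x,i}(\hat\fb_\sigma)=x^{d-1-a_i-2b_i}\hat\fb_\sigma$, so $\hat\fb_\sigma$ spans a one-dimensional $\BT(\BF_p)$-stable subspace on which $\BT(\BF_p)$ acts by the reduced weight obtained from the exponents $n_i=d-1-a_i-2b_i$; by the previous sentence this reduced weight is the reduction modulo $p-1$ of $w(\sigma)=\sum_{i=1}^g(d-1-a_i-2b_i)\varepsilon_i$, that is, $\overline w(\sigma)$.

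Finally, since $\{\hat\fb_\sigma\,\vert\,\sigma\in C_p(g,c,\varepsilon)\}$ is a basis of $F_p(g,c,\varepsilon)$ consisting of weight vectors for $\BT(\BF_p)$, the module decomposes as the direct sum of the lines $\BF_p\hat\fb_\sigma$, and each weight space for a reduced weight $\mu$ is spanned by those $\hat\fb_\sigma$ with $\overline w(\sigma)=\mu$. Hence the multiset $W_p(g,c,\varepsilon)$ of weights occurring in $F_p(g,c,\varepsilon)$ — each counted with the dimension of its weight space — is exactly $\{\overline w(\sigma)\,\vert\,\sigma\in C_p(g,c,\varepsilon)\}$; distinct colorings with a common reduced weight are precisely what the multiset convention records. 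There is no real obstacle here: the only point needing (minor) care is the bookkeeping matching the two descriptions of $X(\BT(\BF_p))$, so that the tuple of eigenvalue exponents in Lemma~\ref{elb1} is literally the class of $w(\sigma)$ modulo $p-1$.
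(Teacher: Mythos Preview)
Your proposal is correct and is exactly the approach the paper takes: the paper states the corollary as ``an immediate corollary of Lemma~\ref{elb1}'' without further proof, and what you have written is simply a careful unpacking of that immediacy. The only content is, as you note, that the $\hat\fb_\sigma$ form a basis of weight vectors whose eigenvalue exponents under $T_{x,i}$ are the coordinates of $w(\sigma)$, read modulo $p-1$.
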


\begin{rem}\lbl{2.3}{\normalfont Note that $W_p(g,c,\varepsilon)$ is also the reduction modulo $p-1$ of $\widetilde W_p(g,c,\varepsilon)$. This is because the restriction of the  $\Sp(2g,K)$-module $\widetilde F_p(g,c,\varepsilon)$ to the finite group $\Sp(2g,\BF_p)$ is $F_p(g,c,\varepsilon) \otimes K$.
}\end{rem}

 In Section~\ref{sec4}, we shall see that this information  is enough to determine $\widetilde W_p(g,c,\varepsilon)$, except that in Case $\mathrm{I}$, we will also need to use the following Lemma.

\begin{lem}\lbl{elb2} Let  $\sigma_0=(0,0,\ldots)$ be the coloring where all edges are colored zero. Then the basis vector $\hat \fb_{\sigma_0}$  in $F_p(g,0,0)$ is fixed up to scalars by the finite Borel subgroup  $\BB(\BF_p)$.
\end{lem}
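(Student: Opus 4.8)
The plan is to use the topological meaning of $\hat\fb_{\sigma_0}$ together with the way $\Sp(2g,\BF_p)$ acts on $F_p(g,0,0)$. Recall from the construction, to be reviewed in Section~\ref{sec3}, that for $c=0$ the lollipop tree $G_g$ is a spine of the genus $g$ handlebody $H_g$, that an admissible $p$-coloring of $G_g$ determines a colored graph in $H_g$ and hence a vector in the TQFT module of $\partial H_g$, and that the $\Sp(2g,\BF_p)$-action on $F_p(g,0,0)$ is compatible, via the surjection $\Mod(\partial H_g)\to\Sp(2g,\BF_p)$, with the TQFT action of the mapping class group $\Mod(\partial H_g)$. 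Under this dictionary $\hat\fb_{\sigma_0}$ is the vector attached to the \emph{empty} colored graph in $H_g$. So it suffices to exhibit a generating set of $\BB(\BF_p)$ consisting of images of mapping classes of $\partial H_g$ that preserve the line $K\hat\fb_{\sigma_0}$.

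First I would split $\BB(\BF_p)=\BT(\BF_p)\ltimes U(\BF_p)$ and treat the two factors separately. The torus $\BT(\BF_p)$ needs nothing new: Lemma~\ref{elb1} says that each $T_{x,i}$ multiplies $\hat\fb_{\sigma_0}$ by a scalar. For the unipotent radical, the identity $\left[\begin{smallmatrix}A&B\\0&(A^t)^{-1}\end{smallmatrix}\right]=\left[\begin{smallmatrix}A&0\\0&(A^t)^{-1}\end{smallmatrix}\right]\left[\begin{smallmatrix}I&A^{-1}B\\0&I\end{smallmatrix}\right]$, in which $A^{-1}B$ is again symmetric, shows that $U(\BF_p)$ is generated by the handle-slide elements $\left[\begin{smallmatrix}A&0\\0&(A^t)^{-1}\end{smallmatrix}\right]$ with $A$ unipotent upper triangular together with the twist elements $\left[\begin{smallmatrix}I&B\\0&I\end{smallmatrix}\right]$ with $B$ symmetric; and the twist subgroup is in turn generated by the symplectic transvections along the classes $\varepsilon_i$ and $\varepsilon_i\pm\varepsilon_j$, which are represented by meridians of $H_g$. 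Each of these generators is induced by a mapping class of $\partial H_g$ that extends over $H_g$: a handle slide is by definition a self-homeomorphism of $H_g$, and each transvection above is induced by a Dehn twist about a simple closed curve bounding a disk in $H_g$. Any mapping class extending over $H_g$ carries the empty colored graph to itself, so by functoriality of the TQFT it fixes $\hat\fb_{\sigma_0}$ up to the framing anomaly, which is a unit scalar. Hence a generating set of $\BB(\BF_p)$ preserves $K\hat\fb_{\sigma_0}$, and therefore so does $\BB(\BF_p)$.

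The step I expect to be the real work is making the previous paragraph rigorous inside the integral, equal-characteristic TQFT of \cite{GM3}: one has to fix a meridian--longitude basis of $H_1(\partial H_g)$ compatible with $G_g$ so that the handle slides and the transvections isolated above are genuinely induced by handlebody homeomorphisms — in particular that those transvections really are Dehn twists about disk-bounding curves, not just arbitrary transvections preserving the Lagrangian — and one has to confirm that the only ambiguity in the action of such a homeomorphism on $\hat\fb_{\sigma_0}$ is multiplication by a unit, so that the conclusion is exactly the asserted "fixed up to scalars." Everything else is the elementary group theory of $\BB(\BF_p)$ recorded above, together with an appeal to Lemma~\ref{elb1}.
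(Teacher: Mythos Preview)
Your proposal is correct and follows essentially the same route as the paper: the paper reduces to showing that the unipotent radical $U(\BF_p)$ fixes $\hat\fb_{\sigma_0}$, then lifts $U(\BF_p)$ through the Lagrangian subgroup $\BL(\BZ)\subset\Sp(2g,\BZ)$ to mapping classes extending over the handlebody (via exactly the handle-slide and meridian Dehn twist generators you isolate), which therefore fix the empty skein representing $\hat\fb_{\sigma_0}$. The step you flag as ``the real work'' is carried out in the paper as Propositions~\ref{Lextend} and~\ref{LZF} together with Remark~\ref{3.3}; one cosmetic point is that your $\varepsilon_i$ should be the meridian classes $m_i$ in the paper's notation, since $\varepsilon_i$ already denotes a weight.
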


The proofs of Lemma~\ref{elb1} and Lemma~\ref{elb2} will be given in Section~\ref{sec3}.

In Section~\ref{sec4} we shall apply Lemma~\ref{elb2} through the following Corollary whose proof we give already here. 

\begin{cor}\lbl{2.4} The highest weight of $\widetilde F_p(g,0,0)$ is congruent modulo $p-1$ to  $w(\sigma_0)=(d-1)\omega_g$.
\end{cor}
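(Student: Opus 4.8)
The plan is to deduce Corollary~\ref{2.4} from Lemma~\ref{elb2} together with Lemma~\ref{elb1}, using the standard fact that a highest weight vector for a finite group of Lie type is, up to scalars, the unique (up to scalars) Borel-fixed vector, and that its weight is read off by the torus action. First I would recall that $\widetilde F_p(g,0,0)$ is a simple $\Sp(2g,K)$-module whose restriction to $\Sp(2g,\BF_p)$ is $F_p(g,0,0)\otimes K$, by property (i) characterizing $\widetilde F_p(g,c,\varepsilon)$. Hence any vector in $F_p(g,0,0)\otimes K$ that is fixed up to scalars by the finite Borel $\BB(\BF_p)$ is, in particular, a candidate for the image of a highest weight vector of the algebraic module.

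Next I would argue that $\hat\fb_{\sigma_0}$ (viewed in $F_p(g,0,0)\otimes K$) actually spans the highest weight line. The key input is the classical result (Steinberg; see e.g. \cite[2.11]{Hum} and the surrounding discussion, or Curtis's theory of irreducible modular representations of finite groups of Lie type) that a simple $p$-restricted module, restricted to the finite group $\Sp(2g,\BF_p)$, has a one-dimensional space of $\BB(\BF_p)$-fixed lines, spanned by the image of the highest weight vector, and that this line is $\BB(\BF_p)$-fixed precisely because the highest weight of a $p$-restricted module is trivial on the unipotent radical and the reduced highest weight determines the $\BT(\BF_p)$-action. By Lemma~\ref{elb2}, the line $K\hat\fb_{\sigma_0}$ is $\BB(\BF_p)$-stable; since the space of such lines is one-dimensional, $K\hat\fb_{\sigma_0}$ must be the highest weight line.

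Finally, I would compute the weight. By Lemma~\ref{elb1}, the torus $\BT(\BF_p)$ acts on $\hat\fb_{\sigma_0}$ through the character $\prod_i T_{x_i,i}\mapsto \prod_i x_i^{\,d-1}$, i.e. through the reduced weight $\overline{w}(\sigma_0)=\sum_{i=1}^g (d-1)\varepsilon_i = (d-1)\omega_g$ in $X(\BT(\BF_p))=X(\BT)\otimes \BZ/(p-1)\BZ$. Since the restriction of a weight of $\BT$ to $\BT(\BF_p)$ is just reduction modulo $p-1$, and since $K\hat\fb_{\sigma_0}$ is the highest weight line, the highest weight $\lambda$ of $\widetilde F_p(g,0,0)$ satisfies $\lambda\equiv (d-1)\omega_g \pmod{p-1}$, which is the assertion. (Note we only get a congruence, not equality, at this stage; pinning down $\lambda$ exactly, as in Theorem~\ref{1.8}(i) Case~$\mathrm{I}$, will be done in Section~\ref{sec4}, where $p$-restrictedness forces $\lambda=(d-1)\omega_g$ on the nose since $0\le d-1\le p-1$.)

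The main obstacle I anticipate is making the representation-theoretic bridge precise: one must quote the correct form of Steinberg's/Curtis's theorem guaranteeing that the Borel-fixed line in the restriction of a $p$-restricted simple module is unique and carries the highest weight (reduced mod $p-1$), rather than merely being \emph{some} weight line. Care is also needed because the finite Borel $\BB(\BF_p)$ only sees weights modulo $p-1$, so a priori several dominant weights could reduce to $(d-1)\omega_g$; this is exactly why the statement is phrased as a congruence, and why Lemma~\ref{elb2} (Borel-fixedness, not just torus-eigenvector) is genuinely needed on top of Lemma~\ref{elb1} to identify $\hat\fb_{\sigma_0}$ as \emph{the} highest weight vector and not just a high weight vector.
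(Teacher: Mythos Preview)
Your proposal is correct and follows essentially the same route as the paper: the highest weight vector of $\widetilde F_p(g,0,0)$ is $\BB(K)$-fixed up to scalars, hence $\BB(\BF_p)$-fixed, and uniqueness of the $\BB(\BF_p)$-fixed line forces it to be proportional to $\hat\fb_{\sigma_0}$, whose reduced weight is then computed from Lemma~\ref{elb1}. The only refinement is that the paper pins down the precise reference you anticipated needing: the uniqueness of the $\BB(\BF_p)$-fixed line is quoted from Carter--Lusztig \cite[Theorem 7.1]{CL}, rather than from Steinberg's restriction theorem itself.
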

\begin{proof}   Let $v$ be a highest weight vector in $\widetilde F_p(g,0,0)$. Then $v$ is fixed up to scalars by  $\BB(K)$ (see \cite[31.3]{H}). Restricting to the finite symplectic group, we can view $v$ as a vector in $F_p(g,c,\varepsilon) \otimes K$ 
 that 
 is fixed
up to scalars by  $\BB(\BF_p)$. By \cite[Theorem 7.1]{CL}, there is a unique line fixed by $\BB(\BF_p)$ in $F_p(g,c,\varepsilon) \otimes K$. Since $\hat \fb_{\sigma_0}$ is also contained in this line by Lemma~\ref{elb2}, we conclude that $v$ and $\hat \fb_{\sigma_0}$ are proportional. In particular, $v$ and $\hat \fb_{\sigma_0}$ have the same reduced weight, which implies the result.\end{proof}

\section{Results from TQFT}\lbl{sec3}

In this section, we review how Integral TQFT leads to the irreducible $\Sp(2g,\BF_p)$-representations $F_p(g,c,\varepsilon)$ of Theorem~\ref{1.7} which were the starting point for this paper. 
In particular, we show how Theorem~\ref{1.7} follows from \cite{GM3} using a result (Lemma~\ref{lemm-app}) originally proved in \cite{M}. We shall  provide a self-contained proof of Lemma~\ref{lemm-app} in Appendix~\ref{App-A}.
We then prove Lemma~\ref{elb1} and Lemma~\ref{elb2}. 

 Let $\Si_g(2c)$ denote a closed surface of genus $g$ equipped with one marked framed point labelled $2c$, where $c$ is an 
integer with $0\leq c\leq d-1$. (Recall $d=(p-1)/2$.) Integral TQFT \cite{GM1} associates to $\Si_g(2c)$ a free $\BZ[\zeta_p]$-module  $\BS_p(\Si_g(2c))$ of finite rank, together with a projective-linear representation of the mapping class group of $\Si_g(2c)$ on this module. Here 
$p\geq 5$ is a prime,
$\zeta_p$ is a primitive $p$-th root of
unity, and $\BZ[\zeta_p]$ is the ring of cyclotomic integers. The mapping class group of $\Si_g(2c)$ can be identified with $\Gamma_{g,1}$, that is, the mapping class group of 
$\Si_{g,1}$,  
an
oriented surface of genus $g$ with one boundary component. (Thus $\Gamma_{g,1}$ is the group of orientation-preserving 
diffeomorphisms
 of 
$\Si_{g,1}$ 
that fix the boundary pointwise, modulo isotopies of such diffeomorphisms.) The projective-linear representation of $\Gamma_{g,1}$ on $\BS_p(\Si_g(2c))$ can be lifted to a linear representation of a certain central extension of $\Gamma_{g,1}$.  The representations of mapping class groups obtained in this way may be considered as an integral refinement of the complex unitary representations coming from Witten-Reshetikhin-Turaev TQFT
associated to the Lie group $\SO(3)$. In particular, the rank of the free $\BZ[\zeta_p]$-module $\BS_p(\Si_g(2c))$ is given by the Verlinde formula (\ref{Verl}).

Recall that $1-\zeta_p$ is a prime in $\BZ[\zeta_p]$, and $\BZ[\zeta_p]/(1-\zeta_p)$ is the finite field
$\BF_p$. Thus we get a representation on the $\BF_p$-vector space 
$$F_p(\Si_g(2c))=\BS_p(\Si_g(2c))/(1-\zeta_p)\BS_p(\Si_g(2c))~.$$ It is shown 
in \cite[Cor. 12.4]{GM-Maslov} that this induces a linear representation of $\Gamma_{g,1}$ on $F_p(\Si_g(2c))$ ({\em i.e.} the central extension is no longer needed). Furthermore, we proved in \cite{GM3} that $F_p(\Si_g(2c))$ has a  
 composition series with (at most) two irreducible factors. These irreducible factors are the $F_p(g,c,\varepsilon)$ defined in the introduction.  More precisely, we have a short sequence of $\Gamma_{g,1}$-representations  
\begin{equation}\lbl{ses} 0 \rightarrow F_p(g,c,1)\rightarrow F_p(\Si_g(2c)) \rightarrow F_p(g,c,0)\rightarrow 0~. 
\end{equation}

It remains to show that the action of $\Gamma_{g,1}$ on the irreducible factors $F_p(g,c,\varepsilon)$ factors through an action of the finite symplectic group $\Sp(2g,\BF_p)$. For $g=1$, this was proved by explicit computation in \cite{GM2}. For $g\geq 2$, we use the following lemma whose proof is deferred to Appendix~\ref{App-A}. 

\begin{lem}\lbl{lemm-app} The Torelli group ${\mathcal I}_{g,1}$ acts trivially on $F_p(g,c,0)$ and  $F_p(g,c,1)$.
\end{lem}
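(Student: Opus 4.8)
The plan is to show that the Torelli group $\BI_{g,1}$, which is the kernel of the natural surjection $\Gamma_{g,1}\to\Sp(2g,\BZ)$, acts trivially on each irreducible subquotient $F_p(g,c,\varepsilon)$ of the $\BF_p$-representation $F_p(\Si_g(2c))$. Since the subquotients are irreducible over $\BF_p$ and the action is known to have finite image (being a subquotient of a TQFT representation over a finite field), the subgroup of $\Gamma_{g,1}$ acting trivially is normal, so it suffices to produce \emph{one} non-trivial element of $\BI_{g,1}$ — or better, a generating set of $\BI_{g,1}$ as a normal subgroup — that acts trivially. The most natural candidates are the Dehn twists along separating simple closed curves (bounding curves) and the bounding pair maps, since by Johnson's theorem these generate $\BI_{g,1}$ (for $g\geq 3$; for $g=2$ one uses that $\BI_{2,1}$ is generated by Dehn twists on separating curves, a result of Mess/Powell).

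First I would recall from Integral TQFT the precise formula for the action of a Dehn twist $\tau_\gamma$ along a simple closed curve $\gamma$ on the module $\BS_p(\Si_g(2c))$, expressed in a basis adapted to a pair-of-pants decomposition of $\Si_g$ dual to the lollipop tree $G_g$, so that $\gamma$ is one of the curves of the decomposition. In that basis, $\tau_\gamma$ acts diagonally, multiplying the basis vector colored $\sigma$ by $\mu_{e}^{\pm 1}$ where $e$ is the color of $\gamma$ under $\sigma$ and $\mu_e$ is the framing anomaly $(-1)^e A^{e(e+2)}$ (up to normalization), with $A$ a primitive $2p$-th root of unity. Reducing modulo $(1-\zeta_p)$, i.e. working over $\BF_p$, these twist eigenvalues become powers of the reduction of $A^2=\zeta_p$, hence are all congruent to $1$; more precisely $A^{e(e+2)}\equiv 1$ and $(-1)^e$ is a genuine sign, but on a \emph{separating} curve the color $e$ is forced to be even (admissibility plus the parity condition), so the sign disappears and $\tau_\gamma$ acts as the identity on $F_p(\Si_g(2c))$, hence on each subquotient. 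This is essentially the statement that the Reshetikhin–Turaev representation at a $p$-th root of unity becomes, after the integral refinement and reduction mod $(1-\zeta_p)$, unipotent-at-separating-twists, and it is exactly the mechanism behind the quantum representations factoring through finite symplectic groups in the $\SO(3)$ case.

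Next I would handle bounding pair maps $\tau_{\gamma_1}\tau_{\gamma_2}^{-1}$, where $\gamma_1\cup\gamma_2$ bounds a subsurface. Choosing the pants decomposition so that $\gamma_1,\gamma_2$ are both decomposition curves (this can always be arranged after applying an element of $\Gamma_{g,1}$, which is harmless by normality), the element acts diagonally with eigenvalue $\mu_{e_1}\mu_{e_2}^{-1}$ on the basis vector colored $\sigma$, where $e_1,e_2$ are the colors of $\gamma_1,\gamma_2$. The cut-and-paste relation among the two curves of a bounding pair forces $e_1\equiv e_2\pmod 2$ and in fact the recoupling constraints give $e_1(e_1+2)\equiv e_2(e_2+2)$ in the relevant sense after reduction; combined with $\zeta_p\equiv 1$ modulo $(1-\zeta_p)$, the two framing anomalies reduce to the same element of $\BF_p$, so the bounding pair map acts as the identity. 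Since separating twists and bounding pair maps normally generate $\BI_{g,1}$ by Johnson's theorem for $g\geq 3$, and since $\BI_{g,1}$ is a normal subgroup of $\Gamma_{g,1}$ with the "acts trivially" locus being normal, we conclude that all of $\BI_{g,1}$ acts trivially on each $F_p(g,c,\varepsilon)$.

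The main obstacle I anticipate is \emph{not} the conceptual structure but the bookkeeping of normalizations: the Integral TQFT of \cite{GM1,GM3} uses a specific renormalized basis (the $\hat\fb_\sigma$, chosen so that structure constants are cyclotomic integers and the reduction mod $(1-\zeta_p)$ is non-degenerate), and the Dehn-twist eigenvalues in that basis differ from the naive Kauffman-bracket ones by scalar gauge factors that must be tracked carefully to confirm they are units reducing to $1$. A secondary subtlety is that a general separating curve or bounding pair need not be a curve of the fixed pants decomposition; one should either invoke normality to move it, or note that any such curve becomes a decomposition curve for \emph{some} lollipop-type decomposition and that the change-of-decomposition (fusion/braiding) matrices are defined over $\BZ[\zeta_p]$, so the diagonal form — and hence the triviality after reduction — is basis-independent. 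Finally, for the low-genus case $g=2$ one must cite the appropriate generation statement for $\BI_{2,1}$ separately, as Johnson's bounding-pair generation requires $g\geq 3$.
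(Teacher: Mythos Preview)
Your argument for separating twists is essentially correct and can be made rigorous: every topological type of separating curve occurs as the meridian of a stick or trunk edge in the lollipop decomposition, the twist along such a curve acts diagonally on the orthogonal lollipop basis $\{\tilde\fb_\sigma\}$ with eigenvalue a power of $\zeta_p$, and normality then handles all separating twists. The genuine gap is in your treatment of bounding pair maps. You claim that in a pants decomposition containing both curves $\gamma_1,\gamma_2$ the map $\tau_{\gamma_1}\tau_{\gamma_2}^{-1}$ acts diagonally with eigenvalues $\equiv 1 \pmod{1-\zeta_p}$, and conclude it acts as the identity on $F_p(\Sigma_g(2c))$. The eigenvalue computation is fine, but the conclusion does not follow: no lollipop decomposition contains a bounding pair (the only non-separating curves in such a decomposition are the loop-edge meridians, and no two of these together separate), so the diagonalizing basis you need is associated to a \emph{non-lollipop} graph and is \emph{not} a basis of the integral lattice $\BS_p$ (cf.\ Remark~\ref{cauti}). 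In that situation, ``all eigenvalues $\equiv 1$'' only yields that the reduction to $F_p$ is \emph{unipotent}, not that it is the identity; indeed the paper's Lemma~\ref{lemm-app-2} shows Torelli elements act by matrices $\left(\begin{smallmatrix}1&0\\ \star&1\end{smallmatrix}\right)$ with $\star$ not forced to vanish, so triviality on all of $F_p(\Sigma_g(2c))$ is not expected. Your approach can be repaired: from $\tau_\gamma^p=\Id$ one gets that bounding pair maps have order dividing $p$ on $F_p$, and since separating twists act trivially the image of $\BI_{g,1}$ on each irreducible factor $F_p(g,c,\varepsilon)$ is a quotient of the abelian group $\BI_{g,1}/\KK_{g,1}$, hence is a normal elementary abelian $p$-subgroup, which must act trivially on an irreducible $\BF_p$-module. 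But this is substantially more than what you wrote.

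For comparison, the paper's proof takes a completely different route. Rather than Johnson's generators, it invokes the characterization of Torelli mapping cylinders via $Y_1$-surgery (Borromean/clasper surgery) due to Matveev, Goussarov, and Habiro: every $f\in\BI_{g,1}$ is realized by embedding finitely many copies of a fixed $6$-component link $Y\subset\BH_3$ into $\Sigma_{g,1}\times\I$ and doing surgery. The corresponding skein element ${\mathcal Y}_p\in\BS_p(\Sigma_3)$ is then computed directly (Lemmas~\ref{Yp} and~\ref{Yp2}): modulo $h=1-\zeta_p$ it equals the empty skein plus a multiple of the single ``tripod'' basis vector $\tilde\fb_{(1,1,1,0,0,0)}$. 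The lollipop lemma of \cite{GM1} then controls the $h$-divisibility of the resulting matrix coefficients and yields the block form $\left(\begin{smallmatrix}1&0\\ \star&1\end{smallmatrix}\right)$. This approach never needs to diagonalize a bounding pair map, which is exactly the step where your argument breaks down.
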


It follows that the action of $\Gamma_{g,1}$ on the irreducible factors $F_p(g,c,\varepsilon)$ factors through an action of the symplectic group $$ \Sp(2g,\BZ) \cong \Gamma_{g,1} / {\mathcal I}_{g,1}~. $$ To see that  this descends to an action of the finite symplectic group $\Sp(2g,\BF_p)$, we invoke a result of Mennicke, who proved that for $g\geq 2$, the group  $\Sp(2g,\BF_p)$ is the quotient of $ \Sp(2g,\BZ)$ by the normal subgroup generated by the $p$-th power of a certain transvection \cite[Satz~10]{Menn}. The result follows, because  transvections lift to Dehn twists in $\Gamma_{g,1}$, and it is well-known that in $\SO(3)$-TQFT at the prime $p$ the $p$-th power of any Dehn twist acts trivially.
This 
concludes the proof of Theorem~\ref{1.7}.

For the proof of  Lemma~\ref{elb1} and  Lemma~\ref{elb2}, we need to say more about the basis vectors $\hat \fb_{\sigma}$ associated to colorings $\sigma$. Recall the graph $G_g$ depicted in Figure~\ref{lol}. A regular neighborhood in $\BR^3$ of $G_g$ is a $3$-dimensional handlebody $\bH_g$. We identify $\Si_g(2c)$ with the boundary of $\bH_g$, in such a way that the univalent vertex labelled $2c$ in the figure meets the boundary surface in the marked point. Given this identification, there is a basis $\{\tilde \fb_\sigma\}$ of $\BS_p(\Si_g(2c))$ called the orthogonal lollipop basis (see \cite[p.~101]{GM2}). The basis vectors are indexed by colorings $\sigma$ in $C_p(g,c,0) \cup C_p(g,c,1)$. Reducing modulo $1-\zeta_p$, we get a basis $\{\hat \fb_\sigma\}$ of $F_p(\Si_g(2c))$. Notice that as an $\BF_p$-vector space,  $F_p(\Si_g(2c))$ is the direct sum of $F_p(g,c,0)$ and $F_p(g,c,1)$. The basis vectors $\hat \fb_\sigma$ where $\sigma\in C_p(g,c,\varepsilon)$ are a basis of $F_p(g,c,\varepsilon)$. 

\begin{rem}\lbl{3.2new}{\normalfont What we denote now by $\BS_p$ was previously denoted by  $\BS_p^+$ in \cite{GM2}, and by $\BS$ in \cite{GM3}. Similarly, in \cite{GM3}, we omitted the subscript $p$ in $F_p(\Si_g(2c))$. Also, we referred to $\varepsilon$ as the parity (even or odd) of a coloring.   Thus $F_p(g,c,1)$ was denoted by $F^{\odd}(\Si_g(2c))$ since it is spanned by the basis vectors corresponding to odd colorings ($\varepsilon=1$). The short exact sequence (\ref{ses}) identifies $F_p(g,c,0)$ with the quotient representation $F_p(\Si_g(2c))/F^{\odd}(\Si_g(2c))$. As a vector space, this quotient was denoted  by $F^{\even}(\Si_g(2c))$ in \cite{GM3} since it is spanned by the basis vectors corresponding to even colorings ($\varepsilon=0$). 
}\end{rem}

\begin{rem}\lbl{3.3}{\normalfont The construction of Integral TQFT in \cite{GM1} uses the skein-theoretic approach to TQFT of \cite{BHMV}. In particular, the basis vectors $\tilde \fb_\sigma$ are represented by certain skein elements (that is, linear combinations of banded links or graphs) in the handlebody $\bH_g$. If a diffeomorphism $f$ of the surface $\Si_g(2c)$ extends to a diffeomorphism $F$ of the handlebody, then the projective-linear action of $f$ on $\BS_p(\Si_g(2c))$ is determined by how $F$ acts on skein elements in the handlebody. The details of  
this are irrelevant for our purposes, with one exception: In the case when $c=0$, the basis vector $\tilde \fb_{\sigma_0}$ associated to the zero coloring $\sigma_0$ can be represented by the empty link in the handlebody $\bH_g$. In particular, it is preserved by any diffeomorphism of the handlebody. So if $f$ extends to a diffeomorphism of the handlebody, then the action of $f$ on $\BS_p(\Si_g(2c))$ fixes (projectively) the basis vector $\tilde \fb_{\sigma_0}$. This will be used below in the proof of Lemma~\ref{elb2}.}\end{rem}

Before giving the proof of  Lemma~\ref{elb1} and  Lemma~\ref{elb2}, we also need to fix our conventions for the homomorphism  $\Gamma_{g,1}\twoheadrightarrow \Sp(2g,\BZ)$. This homomorphism comes from the action of the mapping class group $\Gamma_{g,1}$ on the homology of the surface $\Si_g(2c)$ by isometries of the intersection form. Recall that we  have identified $\Si_g(2c)$ with the boundary of a regular neighborhood $\bH_g$ of the graph $G_g$. We  identify $H_1(\Si_g(2c);\BZ)$ with $\BZ^{2g}$ identifying the homology class of  the positive meridian of  the  $i$th loop (oriented counterclockwise and counting from the left to right) with the $i$th basis vector of $\BZ^{2g}$ and denote this element by $m_i$.  
Similarly we identify the homology class of  the  parallel to the  $i$th loop to be the $(g+i)$th  
basis vector
of $\BZ^{2g}$.
Then the intersection pairing is described by the matrix 
$J_g =\left[\begin{smallmatrix} 0&I_g \\ -I_g&0 \end{smallmatrix} \right]$.

\begin{proof}[Proof of Lemma~\ref{elb1}] First, let us prove Lemma~\ref{elb1} in the special case when $g=1$. Then $F_p(1,c,1)$ is zero and $F_p(1,c,0)=F(\Si_1(2c))$ has dimension $d-c$, as the graph $G_1$ is just a single lollipop, with stick color $2a_1$ equal to $2c$, so that only the color $a_1+b_1$ of the loop edge may vary, and there are $d-a_1=d-c$ possibilities for $b_1$. The representation of  $\SL(2,\BF_p)=  \Sp(2,\BF_p)$ on $F_p(1,c,0)$ is shown in \cite[\S5]{GM2} to 
be 
isomorphic to the standard representation of  $\SL(2,\BF_p)$ on homogeneous polynomials of degree $d-c-1$ in two variables, say $X$ and $Y$. Explicitly, this representation is given by:
\[\begin{bmatrix} \frak a & \frak b\\ \frak c &\frak  d
\end{bmatrix}  X^{d-c-1-b} Y^b  =  (\frak  aX+ \frak cY)^{d-c-1-b}  (\frak bX+\frak dY)^b.\]  
Note that 
\[\begin{bmatrix} x & 0\\ 0 & x^{-1}
\end{bmatrix}  X^{d-c-1-b} Y^b  =  x^{d-1-c-2b}  X^{d-c-1-b} Y^b.\]  
Thus $X^{d-c-1-b} Y^b$, which is the $b$-th element in the monomial basis for the polynomials,  is an eigenvector  for $\left[\begin{smallmatrix} x & 0\\ 0 & x^{-1}
\end{smallmatrix}\right]$ with eigenvalue $ x^{d-1-c-2b}$. One can check that the intertwiner $\Phi$ \cite[\S5]{GM2} defining the isomorphism sends 
$X^{d-c-1-b} Y^b$ to a multiple of $\hat \fb_{\sigma}$ for $\sigma$ the coloring which is $2c$ on the stick edge and $c+b$ on the loop edge of $G_1$. Thus Lemma~\ref{elb1} holds when $g=1$.

The general case is now proved as follows. The torus $\BT(\BF_p)$ is contained in  the subgroup of  $ \Sp(2g,\BF_p)$ isomorphic to a product of $g$ copies of $\SL(2,\BF_p)= \Sp(2,\BF_p)$   arising from the $g$ copies of $\SL(2,\BZ)= \Sp(2,\BZ)$ in $\Sp(2g,\BZ)$ corresponding to each loop of $G_g$. Specifically, the element $T_{x,i}$ defined in Section~\ref{sec2} lies in the $i$-th copy. Similarly, the mapping class group $\Gamma_{g,1}$ contains a subgroup isomorphic to a product of $g$ copies of $\Gamma_{1,1}$. The $i$-th copy of $\Gamma_{1,1}$ is generated by the Dehn twist about the meridian and the Dehn twist about the parallel to the $i$th loop
of $G_g$. 
Since the homomorphism
 $\Gamma_{g,1}\twoheadrightarrow \Sp(2g,\BF_p)$ 
is surjective, we can lift $T_{x,i}$ (non-uniquely) to a mapping class, say  $\phi$,  
which we may assume 
to lie in the $i$-th copy of $\Gamma_{1,1}$. This copy of $\Gamma_{1,1}$ is the mapping class group of the one-holed torus which is cut off from $\Si_g(2c)$ by the simple closed curve $\gamma$ on $\Si_g(2c)$ which is a meridian of the $i$-th stick edge of the graph $G_g$.  Using the integral modular functor properties of \cite[Section 11]{GM1}, we have an injective linear map 
\begin{equation}\label{IF2}
\bigoplus_{a_i=0}^{d-1} \BS_p(\Si_1(2a_i)) \otimes \BS_p(\Si_{g-1}(2a_i,2c))
 \longrightarrow
\BS_p(\Si_g(2c))
\end{equation} given by gluing along $\gamma$. Here, $\Si_{g-1}(2a_i,2c)$ stands for a genus $g-1$ surface with two marked points labelled $2a_i$ and $2c$, respectively. The module  $\BS_p(\Si_{g-1}(2a_i,2c))$ is again a free $\BZ[\zeta_p]$-lattice by \cite[Theorem 4.1]{GM1}. The image of the gluing map (\ref{IF2}) is a free sublattice of $\BS_p(\Si_g(2c))$ of full rank. When tensored with the quotient field of $\BZ[\zeta_p]$, the map (\ref{IF2}) becomes an isomorphism familiar in TQFTs defined over a field under the name of `factorization along a separating curve'. Over the ring $\BZ[\zeta_p]$ the gluing map (\ref{IF2}) is, however,  not surjective in general.

 On the sublattice  of $\BS_p(\Si_g(2c))$ given by the image of the map (\ref{IF2}), the mapping class $\phi$ preserves the direct sum decomposition and in each summand,  $\phi$ acts only on the first tensor factor $\BS_p(\Si_1(2a_i))$. When reduced modulo $1-\zeta_p$, the action induced by $\phi$ on $F_p(\Si_1(2a_i))$ is as described in the genus one case. In particular, 
  the lollipop with stick color $2a_i$ and loop color $a_i + b_i$  
 indexes 
  an eigenvector  for the induced action of $\phi$ 
on $F_p(\Si_1(2a_i))$ 
with eigenvalue $x^{d-1-a_i-2b_i}$. If the map (\ref{IF2}) were an isomorphism of $\BZ[\zeta_p]$-modules, this would prove the lemma by familiar TQFT arguments, since it would then also induce an isomorphism when reduced modulo $1-\zeta_p$. 
 
 Although (\ref{IF2}) is not an isomorphism of $\BZ[\zeta_p]$-modules, we are saved by the following fact (see \cite[Theorem 11.1]{GM1}). Pick a basis $\{ {\mathfrak b}_\nu^{(a_i)}\}$  of the lattice $\BS_p(\Si_{g-1}(2a_i,2c))$
associated to a lollipop tree as in \cite[Theorem 4.1]{GM1}.
 Then the image under the map (\ref{IF2}) of the direct summand $\BS_p(\Si_1(2a_i)) \otimes {\mathfrak b}_\nu^{(a_i)}$ of the L.H.S. of (\ref{IF2})  is a certain power of $1-\zeta_p$ times a direct summand of  the R.H.S. of (\ref{IF2}), that is, of $\BS_p(\Si_g(2c))$. (The power of $1-\zeta_p$ may depend on the summand.) Thus the action of $\phi$ on this direct summand, and hence the action of $T_{x, i}$ on the reduction modulo $1-\zeta_p$ of this direct summand, can be computed from the action of $\phi$ on $\BS_p(\Si_1(2a_i)) \otimes {\mathfrak b}_\nu^{(a_i)}\simeq\BS_p(\Si_1(2a_i))$. Since this action is given by the genus one case, where the lemma is already proved,  it follows that for a coloring  $\sigma= (a_1, \ldots, a_g, b_1, \ldots, b_g, c_1, \ldots )$, the basis vector $\hat\fb_{\sigma}$ is an eigenvector of $T_{x, i}$ with eigenvalue $x^{d-1-a_i-2b_i}$. This completes the proof.
\end{proof}

\begin{rem}\lbl{cauti}{\normalfont  As a word of caution, we mention that a basis  $\{ {\mathfrak b}_\nu^{(a_i)}\}$ of the lattice $\BS_p(\Si_{g-1}(2a_i,2c))$ as needed in the proof above cannot be obtained from colorings of the graph obtained from $G_g$ by cutting $G_g$ at the mid-point of the stick edge of the $i$-th lollipop and removing the connected component containing the loop edge of the lollipop, as one would do when working with TQFTs defined over a field. This is because the remaining graph would not be a lollipop tree. See \cite[Section 10]{GM1}.
}\end{rem}

We now prepare the way for the proof of Lemma~\ref{elb2}. Let $\frak L$ be the span of the homology classes of the meridians  $m_1,m_2,\ldots, m_g$ in  $H_1(\Si_g)=\BZ^{2g}$. 
 Note that $\frak L$ is a lagrangian subspace  with respect to the form $J_g$.   
Let $\BL(\BZ)$ be the subgroup of $\Sp(2g,\BZ)$ consisting of the matrices which preserve $\frak L$. One has that $\BL(\BZ)$ is the set  of matrices of the form
$\left[ \begin{smallmatrix} A&B \\ 0&(A^t)^{-1} \end{smallmatrix} \right]$ where $A \in \GL(g,\BZ)$, and $B$ satisfies $AB^t = BA^t .$ 
We call this subgroup the lagrangian subgroup.

 Let $\Gamma_{g}$ be the mapping class group of the closed surface $\Si_g$ of genus $g$, viewed as the boundary of the handlebody $\BH_g$. Note that  
$\frak L$ is the kernel
 of the map
      $H_1(\Sigma_g) \rightarrow H_1(\bH_g)$.
 If $f \in \Gamma_{g}$, and $f$ extends to a diffeomorphism $F:\bH_g \rightarrow \bH_g$
then  
$f_* \in \BL(\BZ)$.
We have a converse:

\begin{prop} \lbl{Lextend} If 
$f \in \BL(\BZ)$, 
then $f$ is induced by an element of  $\Gamma_{g}$ which extends to a diffeomorphism $F:\bH_g \rightarrow \bH_g$. 
\end{prop}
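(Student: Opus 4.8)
The plan is to write $f$ as a product of diffeomorphisms of $\bH_g$ realizing a convenient set of generators of $\BL(\BZ)$. First I would record the group structure. Writing an element of $\BL(\BZ)$ as $\left[\begin{smallmatrix}A&B\\0&(A^t)^{-1}\end{smallmatrix}\right]$ with $A\in\GL(g,\BZ)$, the condition $AB^t=BA^t$ says exactly that $A^{-1}B$ is symmetric, whence
\[
\begin{bmatrix}A&B\\0&(A^t)^{-1}\end{bmatrix}=\begin{bmatrix}A&0\\0&(A^t)^{-1}\end{bmatrix}\begin{bmatrix}I&A^{-1}B\\0&I\end{bmatrix}.
\]
So $\BL(\BZ)$ is generated by the ``handle matrices'' $h(A)=\left[\begin{smallmatrix}A&0\\0&(A^t)^{-1}\end{smallmatrix}\right]$ ($A\in\GL(g,\BZ)$) together with the ``twist matrices'' $t(S)=\left[\begin{smallmatrix}I&S\\0&I\end{smallmatrix}\right]$, $S$ a symmetric $g\times g$ integer matrix (these form an abelian subgroup, $t(S)t(S')=t(S+S')$). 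In turn, $\GL(g,\BZ)$ is generated by the permutation matrices, one reflection such as $\operatorname{diag}(-1,1,\dots,1)$, and the elementary matrices $I+e_{ij}$ ($i\neq j$, where $e_{ij}$ is the matrix with a single $1$, in position $(i,j)$); while the additive group of symmetric integer matrices is generated by the $e_{ii}$ and the $e_{ij}+e_{ji}$ ($i\neq j$). Thus it suffices to realize each of these elements by a diffeomorphism of $\bH_g$.

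Next I would exhibit the diffeomorphisms. A permutation of the handles of $\bH_g$ realizes the corresponding $h(P)$; a $\pi$-rotation of the $i$-th handle realizes $h(\operatorname{diag}(1,\dots,-1,\dots,1))$ (it negates both $m_i$ and $\ell_i$, where $\ell_i$ denotes the $i$-th parallel, i.e.\ the $(g+i)$-st standard basis vector, and a check of signs shows the rotation is orientation preserving); and sliding the $i$-th handle over the $j$-th one (a ``handle slide'') realizes $h(I\pm e_{ij})$ up to composition with the previous maps and with twist matrices. For the twist matrices: since $m_i$ bounds the $i$-th meridian disk in $\bH_g$, the Dehn twist $t_{m_i}$ extends over $\bH_g$ and acts on $H_1(\Si_g)$ by $x\mapsto x+\langle x,m_i\rangle m_i$, realizing $t(e_{ii})$ (after possibly passing to the inverse, depending on orientation conventions); and for $i\neq j$, band-summing the $i$-th and $j$-th meridian disks along a suitable embedded band in $\bH_g$ produces a properly embedded disk whose boundary is a simple closed curve $\delta\subset\Si_g$ with $[\delta]=m_i+m_j$, so $t_\delta$ extends over $\bH_g$, acts by $x\mapsto x+\langle x,m_i+m_j\rangle(m_i+m_j)$, and $t_\delta\, t_{m_i}^{-1}\, t_{m_j}^{-1}$ realizes $t(e_{ij}+e_{ji})$ (again up to signs fixed by our conventions).

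Finally, given $f\in\BL(\BZ)$, I would write it as a word in these generators and compose the corresponding diffeomorphisms of $\bH_g$ in the same order; the resulting diffeomorphism $F\colon\bH_g\to\bH_g$ restricts on $\Si_g=\partial\bH_g$ to a mapping class inducing $f$ on $H_1(\Si_g)$, as required. The step needing the most care is the construction of orientation-preserving diffeomorphisms of $\bH_g$ realizing the full $\GL(g,\BZ)$ in the handle block --- in particular its determinant $-1$ elements, which genuinely occur in $\BL(\BZ)$ since $h(A)$ has determinant $1$ as a $2g\times 2g$ matrix for every $A$. (Alternatively, one may simply quote the classical fact that the image of the handlebody subgroup of $\Gamma_g$ under $\Gamma_g\to\Sp(2g,\BZ)$ is precisely $\BL(\BZ)$.)
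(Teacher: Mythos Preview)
Your proof is correct and follows essentially the same strategy as the paper's: factor $\BL(\BZ)$ as a semidirect product of the block-diagonal subgroup $\{h(A):A\in\GL(g,\BZ)\}$ and the abelian subgroup $\{t(S):S=S^t\}$, realize the latter by Dehn twists along curves bounding disks in $\bH_g$ (the $m_i$ and band-sums $m_i+m_j$), and realize the former by handle permutations and handle slides. The only cosmetic differences are that the paper obtains the determinant~$-1$ part of $\GL(g,\BZ)$ from a single transposition of handles (making your separate $\pi$-rotation generator redundant), and that the paper is slightly terser about the twist block, asserting directly that products of the Dehn twists along $m_i$ and along curves in class $m_i+m_j$ realize all $t(S)$, whereas you spell out the combination $t_\delta\,t_{m_i}^{-1}t_{m_j}^{-1}$ explicitly.
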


\begin{proof} Consider  the special case when 
$f \in \BL(\BZ)$
has the form
$\left[\begin{smallmatrix} I_g&B \\ 0&I_g \end{smallmatrix} \right]$.
It follows that $B=B^t.$ Consider the building blocks $\left[\begin{smallmatrix}  I_g&E(i,j) \\ 0&I_g \end{smallmatrix} \right]$ where $E(i,i)$ has zero entries everywhere except for 
the $(i,i)$ location where it has a $1$, and $E(i,j)$ (for $i \ne j$) has zero entries everywhere except for   the $(i,j)$ location and  the $(j,i)$ location where it has  $1$'s. These $E(i,j)$ are realized by   Dehn twists along $m_i$ in the case $i=j$, and along a curve representing $m_i +m_j$ when $i \ne j$.
These curves may be chosen so that they bound disks in $\bH_g$. Thus these Dehn twists extend over $\bH_g$. Products of such Dehn twists realize any symmetric matrix $B$. See \cite[p 312-313]{GL}.

We can reduce the general case  to the above case, using another special case: 
$f \in \BL(\BZ)$
has the form $\left[\begin{smallmatrix}  A&0 \\ 0&D \end{smallmatrix} \right]$. We note that this is  the case when $A=(D^t)^{-1},$ where $D\in \GL(g,\BZ)$.   An elementary matrix in  $\SL(g,\BZ)$ can  be realized, as $D$, by sliding one 1-handle in $\BH_g$ over another. Permuting two handles  realizes, as $D$, a transposition matrix. Any $D\in \GL(g,\BZ)$ is a product of elementary matrices and perhaps a transposition matrix. Thus $\left[\begin{smallmatrix}  (D^t)^{-1}&0 \\ 0&D \end{smallmatrix} \right]$  for any  $D\in \GL(g,\BZ)$ can be realized by a diffeomorphism which extends over $\BH_g$.
\end{proof}

We let $U(\BF_p)$ denote the unipotent radical of the finite Borel subgroup $\BB(\BF_p)$.

\begin{prop} \lbl{LZF}  The image of 
$\BL(\BZ)$
under the quotient map $\pi:\Sp(2g,\BZ)\twoheadrightarrow \Sp(2g,\BF_p)$ contains $U(\BF_p)$. 
\end{prop}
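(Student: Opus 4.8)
The plan is to show directly that every element of $U(\BF_p)$ is the reduction modulo $p$ of some element of $\BL(\BZ)$. I would first make $U(\BF_p)$ completely explicit: an element of $\BB(\BF_p)$ is $\left[\begin{smallmatrix} A&B \\ 0&(A^t)^{-1} \end{smallmatrix}\right]$ with $A$ invertible upper triangular over $\BF_p$ and $AB^t=BA^t$, and the projection $\BB(\BF_p)\twoheadrightarrow\BT(\BF_p)$ reads off the diagonal of $A$; since $\BB(\BF_p)$ is the semidirect product of $\BT(\BF_p)$ with its normal subgroup of such matrices having $A$ unitriangular, the unipotent radical $U(\BF_p)$ is exactly $\left\{\left[\begin{smallmatrix} A&B \\ 0&(A^t)^{-1} \end{smallmatrix}\right] : A \text{ upper triangular with $1$'s on the diagonal},\ AB^t=BA^t\right\}$. (A check that this is normal with the claimed quotient is routine.)

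Next, given $u=\left[\begin{smallmatrix} A&B \\ 0&(A^t)^{-1} \end{smallmatrix}\right]\in U(\BF_p)$, I would factor it as $u=\ell r$ with $\ell=\left[\begin{smallmatrix} A&0 \\ 0&(A^t)^{-1} \end{smallmatrix}\right]$ and $r=\left[\begin{smallmatrix} I_g&A^{-1}B \\ 0&I_g \end{smallmatrix}\right]$; a direct block multiplication confirms $\ell r=u$. The relation $AB^t=BA^t$ is equivalent to $A^{-1}B$ being symmetric (multiply the identity $(A^{-1}B)^t=A^{-1}B$ on the left by $A$ and on the right by $A^t$), so $r$ is a genuine element of the symplectic Borel, with its $B$-block symmetric. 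Both $\ell$ and $r$ therefore already lie in $\overline{\BL(\BF_p)}$; it remains to lift them.

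For $\ell$: since $A$ is upper triangular with $1$'s on the diagonal, lifting each strictly-upper entry of $A$ arbitrarily to $\BZ$ gives $\tilde A\in\SL(g,\BZ)$, still upper unitriangular, with $\tilde A\bmod p = A$; then $\left[\begin{smallmatrix} \tilde A&0 \\ 0&(\tilde A^t)^{-1} \end{smallmatrix}\right]\in\BL(\BZ)$ (the constraint $AB^t=BA^t$ is vacuous here as $B=0$) and reduces to $\ell$. For $r$: lift the symmetric matrix $A^{-1}B$ entrywise on and above the diagonal to an integer symmetric matrix $S$; then $\left[\begin{smallmatrix} I_g&S \\ 0&I_g \end{smallmatrix}\right]\in\BL(\BZ)$ and reduces to $r$. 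The product of these two elements of $\BL(\BZ)$ reduces to $u$, which proves the proposition. (Equivalently, one can phrase this as lifting the root subgroups $U_{\varepsilon_i-\varepsilon_j}$ and $U_{\varepsilon_i+\varepsilon_j},U_{2\varepsilon_i}$ generating $U(\BF_p)$ from their integral counterparts inside $\BL(\BZ)$.)

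There is no genuine obstacle, but the one point worth flagging is \emph{why one must restrict to $U(\BF_p)$} rather than trying to lift all of $\overline{\BL(\BF_p)}$: the full reduction map $\BL(\BZ)\to\Sp(2g,\BF_p)$ has image with $A$-block of determinant $\pm1$, since $\det\colon\GL(g,\BZ)\to\{\pm1\}$, so for $p>3$ it is not surjective onto the set of all matrices of the form $\left[\begin{smallmatrix} A&B \\ 0&(A^t)^{-1} \end{smallmatrix}\right]$ over $\BF_p$. On $U(\BF_p)$ this obstruction vanishes because the $A$-block is unipotent, hence of determinant $1$ and freely liftable into $\SL(g,\BZ)$. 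The only other thing to be careful with is keeping the symplectic bookkeeping straight so that the factorization $u=\ell r$ and both lifts indeed sit in $\BL(\BZ)$.
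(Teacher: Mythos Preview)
Your proof is correct and takes essentially the same approach as the paper: factor $u$ into a block-diagonal piece and an ``identity-$A$'' piece (the paper writes $u=\left[\begin{smallmatrix} I_g& BV^t \\ 0&I_g \end{smallmatrix}\right]\left[\begin{smallmatrix} V&0 \\ 0 &(V^t)^{-1}\end{smallmatrix}\right]$, you use the opposite order $u=\ell r$, which is cosmetic), then lift each factor to $\BL(\BZ)$ using that the relevant upper-right block is symmetric and that a unitriangular matrix over $\BF_p$ lifts to $\SL(g,\BZ)$. Your added remark explaining why one must restrict to $U(\BF_p)$ rather than all of $\BB(\BF_p)$ is a nice bonus not present in the paper.
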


\begin{proof} We have that $U(\BF_p)$ is the group of block matrices over $\BF_p$ of the form $\left[\begin{smallmatrix} V&B \\ 0&(V^t)^{-1} \end{smallmatrix} \right]$ where $V$ is  an invertible upper triangular matrix with $1$'s on the diagonal and $B$ satisfies $V B^t= B V^t.$ Each such matrix may be factored $\left[\begin{smallmatrix} I_g& B V^t  \\ 0&I_g \end{smallmatrix} \right] \left[\begin{smallmatrix} V&0 \\ 0 &(V^t)^{-1}\end{smallmatrix} \right],$ and $B V^t$ will equal its transpose. As above, we note that $\left[\begin{smallmatrix} I_g& B V^t  \\ 0&I_g \end{smallmatrix} \right]$ can be written as a product of $\left[\begin{smallmatrix} I_g&E(i,j) \\ 0&I_g \end{smallmatrix} \right]$ matrices. Thus any matrix of the form 
$\left[\begin{smallmatrix} I_g& B V^t  \\ 0&I_g \end{smallmatrix} \right]$
has lifts under  the quotient map $\pi:\Sp(2g,\BZ)\twoheadrightarrow \Sp(2g,\BF_p)$ that lie in  $\BL(\BZ)$. 
Also any matrix of the form  $\left[\begin{smallmatrix} V&0 \\ 0 &(V^t)^{-1} \end{smallmatrix} \right]$ has such a lift.
It follows that any element of $U(\BF_p)$ lifts to an element of $\BL(\BZ)$.
\end{proof}

\begin{proof}
[Proof of Lemma \ref{elb2}] Recall that  $\sigma_0=(0,0,\ldots)$ denotes the coloring where all edges are colored zero. 
We are to show that the basis vector $\hat \fb_{\sigma_0}$  in $F_p(g,0,0)$ is fixed up to scalars by  $\BB(\BF_p)$. It will suffice to show that $\hat \fb_{\sigma_0}$  is fixed by $U(\BF_p)$.   
Note that $\hat \fb_{\sigma_0}$ is the reduction modulo $1-\zeta_p$ of the basis vector $\tilde\fb_{\sigma_0}$ of $\BS_p(\Si_g(0))$ which is represented by the  empty skein, and is thus fixed  by any element of $\Gamma_g$ which extends to a diffeomorphism of $\bH_g$, as observed in Remark~\ref{3.3}. 
By Proposition \ref{LZF}, any element of $U(\BF_p)$ lifts to an element of $\BL(\BZ)$, and by Proposition \ref{Lextend}, any element of $\BL(\BZ)$ 
 is induced by an element of $\Gamma_{g}$    which
 extends to a diffeomorphism of $\bH_g$. This implies the result.
\end{proof}

\section{Proof of Theorems~\ref{1.1} and~\ref{1.8}}\lbl{sec4}

Theorem~\ref{1.1} follows easily from  Theorem~\ref{1.8} and the dimension formulae of \cite[p.~229]{GM3}, where we computed the cardinality of the sets $C_p(g,c,\varepsilon)$ in the various cases in terms of the Verlinde formula (\ref{Verl}) and its cousin (\ref{d-Verl}). 

In the proof of Theorem~\ref{1.8}, we shall need one more result from modular representation theory. Recall that the set of simple positive roots for the symplectic Lie algebra
 consists of $\al_1=\varepsilon_1-\varepsilon_2, \ldots, \al_{g-1}=\varepsilon_{g-1}-\varepsilon_g$, and $\al_g=2 \varepsilon_g$ \cite[Planche III]{B}. 

\begin{lem} \lbl{lem-Premet} Let $p>2$ and suppose $\la$ is a $p$-restricted dominant weight for $\Sp(2g,K)$. Let $\Pi(\la)$ be the set (without multiplicities) of weights occuring in the simple module $L_p(\la)$. 
\begin{enumerate}
\item[(i)] If $\mu\in \Pi(\la)$ is such that $\mu + \al_i\not\in \Pi(\la)$ for all $i=1, \ldots, g$, then $\mu = \la$.
\item[(ii)] If $\la= \sum \eta_i \om_i$ and $\eta_i>0$ for some $i=1, \ldots, g$, then $\la -\al_i  \in \Pi(\la)$.
\end{enumerate}
\end{lem}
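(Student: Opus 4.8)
The plan is to transport both statements to the Weyl module $V(\la)$ (equivalently, to the characteristic-zero simple module $L_0(\la)$) by means of a theorem of Premet, and then to read them off from standard facts. Recall that, since $p>2$ is a good prime for the root system of type $C_g$ and $\la$ is $p$-restricted, a theorem of Premet (see the survey \cite{Hum} for an exposition) asserts that the set $\Pi(\la)$ of weights of $L_p(\la)$ coincides with the set of weights of $V(\la)$. As $\ch V(\la)$ is given by Weyl's character formula, this set is the saturated set of weights with highest weight $\la$; explicitly, $\mu\in\Pi(\la)$ if and only if $\mu\in\la+\BZ\Phi$ and $\mu^+\preceq\la$, where $\mu^+$ is the dominant $W$-conjugate of $\mu$ and $\preceq$ is the dominance order. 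In particular $\Pi(\la)$ equals the set of weights of $L_0(\la)$, and it enjoys the saturation property: for every $\mu\in\Pi(\la)$, every root $\al$, and every integer $0\le t\le\langle\mu,\al^\vee\rangle$, one has $\mu-t\al\in\Pi(\la)$.

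For part (i) I would argue inside $L_0(\la)$, which has the same weight set. Let $v$ be a nonzero weight vector of weight $\mu$. For each $i$ the vector $e_iv$ lies in $L_0(\la)_{\mu+\al_i}=0$, so $v$ is annihilated by all the $e_i$; since these generate $\mathfrak{n}^+$, the vector $v$ is a maximal vector. But in the simple highest weight module $L_0(\la)$ the space of maximal vectors is one-dimensional, equal to the highest weight space $L_0(\la)_\la$. Hence $\mu=\la$.

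For part (ii) it suffices to apply the saturation property with $\mu=\la$, the root $\al=\al_i$, and $t=1$: since $\langle\la,\al_i^\vee\rangle=\eta_i\ge1$, this gives $\la-\al_i\in\Pi(\la)$. Alternatively, one can avoid Premet's theorem here by restricting $L_p(\la)$ to the $\SL_2$-subgroup attached to $\al_i$: the highest weight vector, being fixed by the unipotent radical of the Borel, generates an $\SL_2$-submodule that is a quotient of the $\SL_2$-Weyl module of highest weight $\eta_i$, and since $0\le\eta_i\le p-1$ (here $p$-restrictedness enters) this Weyl module is simple, isomorphic to $S^{\eta_i}(K^2)$, whose weights are $\eta_i,\eta_i-2,\ldots,-\eta_i$; the weight-$(\eta_i-2)$ vector it contains has $\BT$-weight $\la-\al_i$.

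The one genuinely non-elementary ingredient, and the main obstacle, is Premet's theorem equating the weight sets of $L_p(\la)$ and $V(\la)$. For part (ii) it can be bypassed as indicated, but for part (i) I do not see an elementary substitute: the naive attempt — that a weight vector killed by all $e_i$ must be a maximal vector, forcing $\mu=\la$ — breaks down in characteristic $p$, because in a simple $G$-module the highest weight vector is annihilated by the entire positive part of the hyperalgebra (all divided powers $e_\al^{(k)}$), which is strictly stronger than being killed by the $e_i$ alone, so a weight vector with $e_iv=0$ for all $i$ need not generate a submodule of highest weight $\mu$. Passing to characteristic zero via Premet's theorem is precisely what repairs this.
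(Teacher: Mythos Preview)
Your proposal is correct and follows essentially the same approach as the paper: both reduce to characteristic zero via Premet's theorem that $\Pi(\la)$ coincides with the weight set of $L_0(\la)$ for $p$-restricted $\la$ and $p>2$. The paper's proof simply asserts that (i) and (ii) hold over $\BC$ and invokes Premet; you supply the (standard) characteristic-zero arguments that the paper leaves implicit, and your alternative $\SL_2$-restriction proof of (ii) is a nice bonus not present in the paper.
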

\begin{proof} This is true for the sets of weights of simple $\Sp(2g,\BC)$-modules. 
By a result of Premet \cite{Pr} (see also the discussion in \cite[\S 3.2]{Hum}), $\Pi(\la)$ is the same when working over $K$ or $\BC$ as long as 
$\la$ is $p$-restricted and 
the characteristic $p>2$. The result follows.
\end{proof}

Let us now prove Theorem~\ref{1.8}.  Recall that we must determine $\widetilde W_p(g,c,\varepsilon)$ (= the multiset of weights occuring in $\widetilde F_p(g,c,\varepsilon)$), and we must determine which of the weights in $\widetilde W_p(g,c,\varepsilon)$ is the highest weight, which we denote by $\la_p(g,c,\varepsilon)$. As the details of this are somewhat involved, let us first outline the strategy of the proof.  The proof proceeds in four steps, as follows.

\medskip\noindent{\bf Step 1.} The first step is to compute $\widetilde W_p(g,c,\varepsilon)$ modulo $p-1$. As observed in Remark~\ref{2.3}, we already know the answer: it is the multiset $W_p(g,c,\varepsilon)$ which was determined in Corollary~\ref{2.2}. Recall that the elements of $W_p(g,c,\varepsilon)$ are reduced weights, and that every coloring in $C_p(g,c,\varepsilon)$ determines a reduced weight in $W_p(g,c,\varepsilon)$.

\medskip\noindent{\bf Step 2.} The second step is to identify 
$\overline\la_p(g,c,\varepsilon)$, that is,   
the reduced weight in $W_p(g,c,\varepsilon)$ which is the reduction modulo $p-1$ of the highest weight. In Case $\mathrm{I}$, this is the reduced weight associated to the zero coloring, as proved in Corollary~\ref{2.4}. In the three other cases, we will show that 
$\overline\la_p(g,c,\varepsilon)$
is the reduced weight associated to
a coloring
illustrated  in Figure \ref{all}.
\begin{figure}[h]
\centerline{\includegraphics[width= .52in]{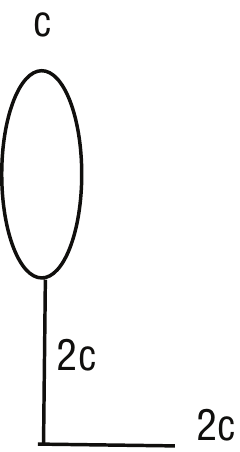}  \quad , \includegraphics[width=1.2in]{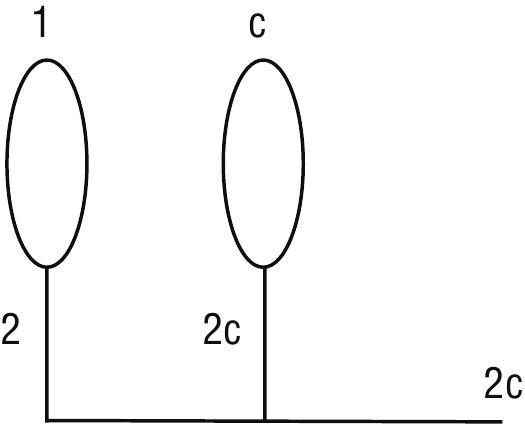}\quad , \includegraphics[width=1.2in]{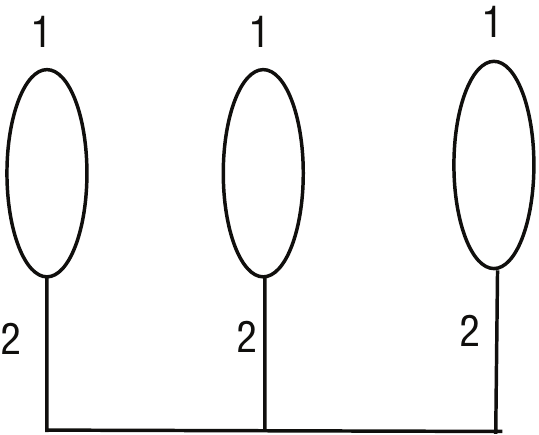}}
\caption{The colorings associated to the highest weights in Case $\mathrm{II}$, Case $\mathrm{III}$, and Case $\mathrm{IV}$. The leftmost part of $G_g$ is not drawn as it is colored zero. For the same reason the 
rightmost
edge is not drawn in Case $\mathrm{IV}$ as $c$ is zero. These graphs could be guessed by taking the smallest coloring which is rightmost on the graph and has the  given type.}\lbl{all}
\end{figure}

This will be proved 
by showing that these colorings satisfy the hypothesis of the 
following Lemma. We let $\overline\al_i$ denote the reduction 
modulo $p-1$ of the root  $\alpha_i$.

\begin{lem}\lbl{4.1} Let $\sigma$ be a coloring in $C_p(g,c,\varepsilon)$ and let $\overline w(\sigma)$ be its associated reduced weight. 
If   $\overline w(\sigma) + \overline\al_i \notin W_p(g,c,\varepsilon)$  for all $i=1, \ldots, g$, then 
$\overline w(\sigma)= \overline \la_p(g,c,\varepsilon)$.
\end{lem}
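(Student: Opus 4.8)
Here is my plan for proving Lemma~\ref{4.1}.

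\medskip

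The plan is to deduce Lemma~\ref{4.1} from part (i) of Lemma~\ref{lem-Premet} by reducing everything modulo $p-1$. The key point is that the module $\widetilde F_p(g,c,\varepsilon)$ is a $p$-restricted simple $\Sp(2g,K)$-module isomorphic to some $L_p(\la_p(g,c,\varepsilon))$, so Lemma~\ref{lem-Premet}(i) applies to it: if $\mu$ is a weight in $\widetilde W_p(g,c,\varepsilon)$ with $\mu+\al_i$ not a weight for every $i$, then $\mu$ is the highest weight $\la_p(g,c,\varepsilon)$. What I want to transfer to the reduced (mod $p-1$) setting is this maximality characterization.

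\medskip

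First I would recall from Corollary~\ref{2.2} and Remark~\ref{2.3} that $W_p(g,c,\varepsilon)$, the multiset of reduced weights in $F_p(g,c,\varepsilon)$, is precisely the reduction mod $p-1$ of $\widetilde W_p(g,c,\varepsilon)$, and that it equals $\{\overline w(\sigma)\mid \sigma\in C_p(g,c,\varepsilon)\}$. Now suppose $\sigma\in C_p(g,c,\varepsilon)$ satisfies $\overline w(\sigma)+\overline\al_i\notin W_p(g,c,\varepsilon)$ for all $i$. Let $\mu\in\widetilde W_p(g,c,\varepsilon)$ be any (actual, un-reduced) weight of $\widetilde F_p(g,c,\varepsilon)$ lifting $\overline w(\sigma)$ — such a $\mu$ exists because $W_p(g,c,\varepsilon)$ is the mod-$p-1$ reduction of $\widetilde W_p(g,c,\varepsilon)$. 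For each $i$, if $\mu+\al_i$ were a weight of $\widetilde F_p(g,c,\varepsilon)$, then its reduction $\overline w(\sigma)+\overline\al_i$ would lie in $W_p(g,c,\varepsilon)$, contradicting our hypothesis; hence $\mu+\al_i\notin\widetilde W_p(g,c,\varepsilon)$ for all $i$. By Lemma~\ref{lem-Premet}(i), applied to $L_p(\la_p(g,c,\varepsilon))=\widetilde F_p(g,c,\varepsilon)$, this forces $\mu=\la_p(g,c,\varepsilon)$. Reducing mod $p-1$ gives $\overline w(\sigma)=\overline\la_p(g,c,\varepsilon)$, which is the assertion.

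\medskip

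The one point that needs a little care — and the step I expect to be the main (minor) obstacle — is making sure Lemma~\ref{lem-Premet}(i) is being applied legitimately, namely that $\widetilde F_p(g,c,\varepsilon)$ is genuinely $p$-restricted so that Premet's theorem governs its set of weights. This is exactly condition (ii) in Steinberg's characterization of $\widetilde F_p(g,c,\varepsilon)$ recorded after Theorem~\ref{1.7}, so it is available, but the proof should cite it explicitly. Everything else is a routine unwinding of the fact (Remark~\ref{2.3}) that passing from $\widetilde F_p(g,c,\varepsilon)$ to its restriction to $\Sp(2g,\BF_p)$ amounts to reducing weights modulo $p-1$; in particular one should note that $\overline\la_p(g,c,\varepsilon)$ is well-defined as the reduction of the highest weight, independently of the chosen lift $\mu$, since any two lifts of $\overline w(\sigma)$ that are both forced to equal $\la_p(g,c,\varepsilon)$ must agree.
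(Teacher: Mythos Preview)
Your argument is correct and is essentially identical to the paper's own proof: lift $\overline w(\sigma)$ to a weight $\mu\in\widetilde W_p(g,c,\varepsilon)$, observe that the hypothesis forces $\mu+\alpha_i\notin\widetilde W_p(g,c,\varepsilon)$ for all $i$, invoke Lemma~\ref{lem-Premet}(i) (using that $\widetilde F_p(g,c,\varepsilon)$ has $p$-restricted highest weight by construction), and reduce modulo $p-1$. The paper makes exactly the same points, including the remark that $p$-restrictedness is what licenses the appeal to Premet's result.
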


\begin{proof} Let $\mu$ be any lift of $\overline w(\sigma)$ to $\widetilde W_p(g,c,\varepsilon)$. The hypothesis implies that for all $i=1, \ldots, g$, the weight $\mu+\alpha_i$ does not occur in $\widetilde W_p(g,c,\varepsilon)$. By the construction of the module $\widetilde F_p(g,c,\varepsilon)$, we know that its highest weight is $p$-restricted, so that we can apply Lemma~\ref{lem-Premet}(i). It follows that $\mu$ is the highest weight, and so $\overline w(\sigma)$ is the reduction modulo $p-1$ of the highest weight, as claimed.
\end{proof}

\medskip\noindent{\bf Step 3.} Once 
$\overline \la_p(g,c,\varepsilon)$ is known, the third step will be to determine $ \la_p(g,c,\varepsilon)$.
Write $\la_p(g,c,\varepsilon)=\sum \eta_i \om_i$ with $0\leq \eta_i\leq p-1$ (since $\la_p(g,c,\varepsilon)$ is $p$-restricted). The coefficient $\eta_i$ is determined by its reduction 
 $\overline\eta_i$ modulo $p-1$ provided  $\overline\eta_i\not= 0$. But if $\overline\eta_i=0$, then $\eta_i$ can be either $0$ or $p-1$. We shall show that  $\eta_i=0$ whenever $\overline\eta_i=0$ in all our cases by means of the following Lemma.

 \begin{lem}\lbl{4.4} Let 
$\la_p(g,c,\varepsilon)= \sum \eta_i \om_i$ 
be the highest weight in  $\widetilde W_p(g,c,\varepsilon)$. 
If $\overline\la_p(g,c,\varepsilon) -\overline\al_i \notin W_p(g,c,\varepsilon)$, 
then $\eta_i=0$. 
\end{lem}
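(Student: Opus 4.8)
The plan is to argue by contrapositive, using part (ii) of Lemma~\ref{lem-Premet}. Suppose $\eta_i\neq 0$. Since $\la_p(g,c,\varepsilon)$ is $p$-restricted, this means $1\le \eta_i\le p-1$, so in particular the coefficient of $\om_i$ in $\la_p(g,c,\varepsilon)$ is strictly positive. Lemma~\ref{lem-Premet}(ii) then applies and tells us that $\la_p(g,c,\varepsilon)-\al_i$ is a weight occuring in $\widetilde F_p(g,c,\varepsilon)$, i.e. $\la_p(g,c,\varepsilon)-\al_i \in \widetilde W_p(g,c,\varepsilon)$ (as a set of weights, ignoring multiplicities — which is all Lemma~\ref{lem-Premet} controls). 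Reducing modulo $p-1$ and using Remark~\ref{2.3} (the fact that $W_p(g,c,\varepsilon)$ is the reduction modulo $p-1$ of $\widetilde W_p(g,c,\varepsilon)$), we conclude that $\overline\la_p(g,c,\varepsilon)-\overline\al_i \in W_p(g,c,\varepsilon)$. This is exactly the negation of the hypothesis, so we are done.

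The key steps, in order, are: (1) assume $\eta_i\neq 0$ and observe this forces $\eta_i>0$; (2) invoke Lemma~\ref{lem-Premet}(ii), whose hypotheses require only that $\la_p(g,c,\varepsilon)$ be $p$-restricted (which holds by construction of $\widetilde F_p(g,c,\varepsilon)$, property (ii) of Steinberg's restriction theorem) and $p>2$ (which holds since $p\geq 5$); (3) pass from the weight statement $\la_p(g,c,\varepsilon)-\al_i\in\Pi(\la_p(g,c,\varepsilon))=\widetilde W_p(g,c,\varepsilon)$ (as sets) down to the reduced-weight statement $\overline\la_p(g,c,\varepsilon)-\overline\al_i\in W_p(g,c,\varepsilon)$ via Remark~\ref{2.3}; (4) conclude by contrapositive.

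I do not anticipate a real obstacle here: the lemma is essentially a direct translation of Lemma~\ref{lem-Premet}(ii) through the mod-$p-1$ reduction. The only point demanding a little care is the bookkeeping around multisets versus sets — Lemma~\ref{lem-Premet} is phrased in terms of the underlying sets of weights $\Pi(\la)$, whereas $W_p(g,c,\varepsilon)$ and $\widetilde W_p(g,c,\varepsilon)$ are multisets — but since membership in the multiset $W_p(g,c,\varepsilon)$ is equivalent to membership in its underlying set, and likewise for $\widetilde W_p$, this causes no difficulty. One should also note that the reduction map on weights need not be injective, so a priori a reduced weight in $W_p(g,c,\varepsilon)$ could have several lifts in $\widetilde W_p$; but we only need the (easy) direction that a genuine weight of $\widetilde F_p(g,c,\varepsilon)$ reduces to an element of $W_p(g,c,\varepsilon)$, which is immediate from the fact that the restriction of $\widetilde F_p(g,c,\varepsilon)$ to $\Sp(2g,\BF_p)$ is $F_p(g,c,\varepsilon)\otimes K$.
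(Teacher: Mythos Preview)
Your proof is correct and takes exactly the same approach as the paper, which simply states that the lemma follows immediately from Lemma~\ref{lem-Premet}(ii). You have merely spelled out the contrapositive and the passage to reduced weights via Remark~\ref{2.3} in more detail than the paper does.
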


\begin{proof} This follows immediately from Lemma~\ref{lem-Premet}(ii).
\end{proof}

\medskip\noindent{\bf Step 4.} Once Step 3 is completed, it only remains to prove that $\widetilde W_p(g,c,\varepsilon)$ is as claimed in the theorem. This is now easy. Recall the notation $d=(p-1)/2$. We simply note that all weights in $\widetilde W_p(g,c,\varepsilon)$ must lie in $[1-d,d-1]^g$ as they must lie in the convex hull of the orbit under the Weyl group of the highest weight, and in each case this highest weight has been shown in Step 3 to lie in $[0,d-1]^g$. But no two distinct integer points in $[1-d,d-1]^g$ agree modulo $p-1=2d$ in each coordinate. Thus $\widetilde W_p(g,c,\varepsilon)$ is determined by its reduction modulo $p-1$, and we are done.

\medskip

In the rest of this section, we shall now  carry out Steps 2 and 3 in the various cases. Having done this, the proof will be complete. To simplify notation, we shall denote the highest weight $\la_p(g,c,\varepsilon)$ simply by $\la$. 
Also, from now on when we say coloring, we mean a small admissible $p$-coloring.
   
Recall that a coloring $\sigma= (a_1, \ldots, a_g, b_1, \ldots, b_g, c_1, \ldots )$ assigns the color $2a_i$ to the $i$th stick edge and the color $a_i+b_i$ to the $i$th loop edge (see Figure~\ref{lol2}).  Recall also that $a_i \ge 0$, $b_i \ge 0$  
satisfy the smallness condition 
$a_i +b_i \le d-1$,
and the coefficient of $\varepsilon_i$ in the weight $w(\si)$ 
is $d-1-a_i-2b_i$ (see (\ref{defw})).

Before we begin with the cases, we state two  lemmas. 
Both are  an easy consequence of Corollary \ref{2.2} and  the smallness condition.
Recall $2d=p-1.$
\begin{lem}\lbl{dcoef} 
       If     $n_i \equiv d \pmod{2d}$
for some  $1 \le  i \le g$, then
$\sum_{i=1}^g n_i \overline \varepsilon_i\notin W_p(g,c,\varepsilon)$,
\end{lem}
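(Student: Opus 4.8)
The plan is a short proof by contradiction that reduces immediately to the smallness condition. First I would assume, for contradiction, that $\sum_{i=1}^g n_i\overline\varepsilon_i$ does belong to $W_p(g,c,\varepsilon)$. By Corollary~\ref{2.2} there is then a small admissible $p$-coloring $\sigma=(a_1,\ldots,a_g,b_1,\ldots,b_g,c_1,\ldots)$ of type $(c,\varepsilon)$ with $\overline w(\sigma)=\sum_{i=1}^g n_i\overline\varepsilon_i$. Since by (\ref{defw}) the coefficient of $\varepsilon_i$ in $w(\sigma)$ equals $d-1-a_i-2b_i$, and since reductions are taken modulo $p-1=2d$, this forces
\[ n_i\equiv d-1-a_i-2b_i\pmod{2d}\qquad(1\le i\le g). \]

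Next I would specialize to the index $i$ with $n_i\equiv d\pmod{2d}$. For that $i$ the congruence above becomes $d\equiv d-1-a_i-2b_i\pmod{2d}$, that is, $a_i+2b_i\equiv -1\equiv 2d-1\pmod{2d}$. Finally I would invoke the smallness condition (\ref{smallness}), which gives $0\le a_i+b_i\le d-1$; using $a_i,b_i\ge 0$ once more, we obtain $0\le a_i+2b_i=(a_i+b_i)+b_i\le (d-1)+(d-1)=2d-2$. Hence $a_i+2b_i$ is an integer lying in $\{0,1,\ldots,2d-2\}$, so it cannot be congruent to $2d-1$ modulo $2d$, a contradiction. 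Therefore $\sum_{i=1}^g n_i\overline\varepsilon_i\notin W_p(g,c,\varepsilon)$.

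Since everything comes down to unwinding the description of $W_p(g,c,\varepsilon)$ furnished by Corollary~\ref{2.2} together with a one-line estimate, there is no serious obstacle here; the only point requiring a moment's attention is rewriting the smallness bound $a_i+b_i\le d-1$ as the bound $a_i+2b_i\le 2d-2$ on the linear combination that actually appears in the weight, which uses the nonnegativity of $b_i$ a second time.
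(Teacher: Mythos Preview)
Your proof is correct and is precisely the argument the paper has in mind: the paper simply states that the lemma is ``an easy consequence of Corollary~\ref{2.2} and the smallness condition,'' and your write-up spells this out. The only nontrivial step, bounding $a_i+2b_i\le 2d-2$ from $a_i+b_i\le d-1$ and $b_i\ge 0$, is handled correctly.
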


\begin{lem}\lbl{d-1} Suppose  
$\sigma= (a_1, \ldots, a_g, b_1, \ldots, b_g, c_1, \ldots )$, 
and 
      $\overline w(\si)=\sum_{i=1}^g n_i \overline \varepsilon_i$.
If 
       $n_i \equiv d-1 \pmod{2d}$
 for some
$1 \le  i \le g$, then $a_i= b_i =0.$ \end{lem}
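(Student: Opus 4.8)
First I would recall the setup: by Corollary~\ref{2.2}, every element of $W_p(g,c,\varepsilon)$ is the reduction modulo $p-1=2d$ of a weight $w(\tau)$ for some coloring $\tau\in C_p(g,c,\varepsilon)$, and the $i$th coefficient of $w(\tau)$ is $d-1-a_i'-2b_i'$, where $a_i',b_i'$ are the parameters of $\tau$ on the $i$th lollipop. Now suppose $\sigma$ is as in the statement, with $\overline w(\sigma)=\sum n_i\overline\varepsilon_i$ and $n_i\equiv d-1\pmod{2d}$ for some fixed $i$. Since $n_i$ is literally the integer $d-1-a_i-2b_i$, the congruence $d-1-a_i-2b_i\equiv d-1\pmod{2d}$ forces $a_i+2b_i\equiv 0\pmod{2d}$, i.e. $a_i+2b_i\in\{0,2d,4d,\ldots\}$.

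Next I would use the smallness condition $a_i+b_i\le d-1$ together with $a_i,b_i\ge 0$ to bound $a_i+2b_i$. We have $a_i+2b_i=(a_i+b_i)+b_i\le (d-1)+(d-1)=2d-2<2d$. Hence the only multiple of $2d$ that $a_i+2b_i$ can equal is $0$. Therefore $a_i+2b_i=0$, and since both are nonnegative, $a_i=b_i=0$, which is exactly the conclusion.

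This is essentially a one-line bounding argument once the definitions are unwound, so there is no real obstacle; the only thing to be slightly careful about is that the hypothesis is stated for the \emph{given} coloring $\sigma$ (whose coefficients are genuine integers in the range $[1-d,d-1]$ by smallness), so one does not even need Corollary~\ref{2.2} to extract the integer value of $n_i$ — it is read off directly from $\sigma$ via (\ref{defw}). The role of Corollary~\ref{2.2} is only to guarantee that $\overline w(\sigma)$ is indeed an element of $W_p(g,c,\varepsilon)$, making the hypothesis non-vacuous; the implication itself is pure arithmetic with the smallness inequality.
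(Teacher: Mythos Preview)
Your argument is correct and is exactly the intended one: the paper simply states that the lemma is ``an easy consequence of Corollary~\ref{2.2} and the smallness condition,'' and you have spelled out precisely that arithmetic. Your observation that the implication itself only uses the definition~(\ref{defw}) together with the bound $a_i+2b_i\le 2(d-1)<2d$ from smallness is accurate; Corollary~\ref{2.2} just identifies $\overline w(\sigma)$ as a reduced weight in $W_p(g,c,\varepsilon)$.
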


\medskip
\noindent {\bf  Case $\mathrm{I}$.} Recall $\si_0$ is the coloring which is zero on every edge.   Step 2 was already taken in Corollary  \ref{2.4} and so we 
know that 
$\overline \la = \overline w(\sigma_0)=(d-1) \overline \omega_g$. In Step 3, we must show that $\la =(d-1)\omega_g$. By Lemma \ref{4.4}, it is enough to show that $\overline \la- \overline \al_j \notin W_p(g,0,0)$ for $1 \le j \le g-1$, which follows easily from Lemma \ref{dcoef}.  
This completes the proof in Case  $\mathrm{I}$.

\begin{rem}\lbl{sons}{\normalfont One cannot accomplish Step 2 in Case $\mathrm{I}$ in the same way as we do below in Cases $\mathrm{II}$,  $\mathrm{III}$, and $\mathrm{IV}$, as
 $\sigma_0$ does not satisfy the hypotheses of  Lemma \ref{4.1}.
This is because $\overline w(\sigma_0)+\overline \al_g= (d-1) \overline \om_{g-1}+(d+1)  \overline \varepsilon_g  = \overline w(\sigma') \in  W_p(g,0,0)$, where $\sigma'$  denotes the coloring with all $a_i$'s, $b_i$'s and $c_i$'s  zero except for $b_g= d-1.$  }\end{rem}

\medskip
\noindent {\bf  Case $\mathrm{II}$.}  Let $\sigma$ be the coloring on the left of Figure \ref{all}, then
$$\overline w(\sigma) = (d-1) \sum_{i=1}^{g-1} \overline\varepsilon_i +(d-c-1) \overline \varepsilon_g=
(d-c-1) \overline \om_g+ c \ \overline\om_{g-1}.$$

We  
shall 
show that $\la = w(\sigma)=
(d-c-1)\om_g+ c\,\om_{g-1}.$ 

As explained above, Step 2 in the proof is based on Lemma~\ref{4.1}. We 
must 
show that $\overline w(\sigma) + \overline \al_i \notin W_p(g,c,0)$ for all $1 \le i \le g$. For $i\neq g$ this follows from Lemma \ref{dcoef}. For $i=g$, it is proved by contradiction, as follows. Assume that $\overline w(\sigma) +\overline \al_g
\in  W_p(g,c,0)$. Then there 
 is a  
coloring 
$\si'=(a'_1, \ldots, b'_1, \ldots, c'_1, \ldots)$ of 
type $(c,0)$ with $$\overline w(\si')= 
\overline  w(\sigma) +\overline \al_g =
(d-1) \sum_{i=1}^{g-1} \overline \varepsilon_i 
+(d-c+1) \overline \varepsilon_g~.$$ 
We will see such a coloring is impossible. By  Lemma \ref{d-1}, $a'_i=b'_i=0$ for $i \ne g$. 
In other words, $\si'$ must color all but the rightmost lollipop by zero. By admissibility, it follows that $a'_g=c$.
On the other hand, 
since the coefficient of $\overline\varepsilon_g$ in $\overline w(\si')$ is $ d-1-a'_g-2b'_g\pmod{2d}$, we have 
\begin{equation}\lbl{eqg} 
d-c+1\equiv d-1-a'_g-2b'_g = d-1-c-2b'_g \pmod{ 2d}~,
\end{equation}
so      
 $b'_g \equiv -1 \pmod{ d},$
so $b'_g=d-1$ 
by smallness
and hence $a'_g=0$
again
by smallness (see (\ref{smallness})). 
This contradicts $a'_g=c>0$.  
Thus  $\si'$ does not exist 
and hence 
 $\overline w(\sigma) +\overline \al_g
\notin  W_p(g,c,0)$.

This completes Step 2, and we now know that $\overline \la = \overline w(\si)=
(d-c-1) \overline \om_g+ c \ \overline\om_{g-1}$. 

Next, Step 3 in the proof is based on Lemma~\ref{4.4}. We 
must 
show that $\overline \la - \overline \al_i \notin   W_p(g,c,0)$ whenever the coefficient of $\overline \omega_i$ in $\overline \la$ is zero. For $1 \le j \le g-2$, this follows from Lemma \ref{dcoef}.  
Then if  
$1\leq c\leq d-2$, Step 3 is already complete, as the coefficient of $\overline \omega_g$ and of $\overline \omega_{g-1}$ in $\overline \la$ is non-zero. But if $c=d-1$, the coefficient of $\overline \omega_g$ in $\overline \la$ is zero and we therefore also need to show that $\overline \la - \overline \al_g \notin   W_p(g,d-1,0)$.
This is again proved by contradiction. Assume that $\overline \la -\overline \al_g
\in  W_p(g,d-1,0)$. Then there is a
coloring $\si''$ of 
type $(d-1,0)$ with 
\begin{equation}\lbl{eq''}
\overline w(\si'')= 
\overline \la -\overline \al_g =
(d-1) \sum_{i=1}^{g-1} \overline \varepsilon_i  
      -2 \, \overline \varepsilon_g~.
\end{equation}  
As before when we ruled out the coloring  $\sigma'$ in Step 2, it follows from Lemma \ref{d-1} that $\si''$ must color all but the rightmost lollipop by zero. By admissibility, it follows that $a''_g=c=d-1$
and so $b''_g=0$ by smallness. 
But then the coefficient of $\overline\varepsilon_g$ in $\overline w(\si'')$ must be 
$d-1-a''_g-2b''_g =0\pmod{2d}$, which contradicts (\ref{eq''})
where this coefficient is $-2\pmod{2d}$.
This contradiction shows that $\si''$ does not exist. This completes the proof in Case  $\mathrm{II}$.

\medskip
In Case $\mathrm{III}$ and $\mathrm{IV}$, we will also need the following Lemma which says that odd colorings must assign nonzero colors to at least two 
lollipop sticks, 
and to at least three 
lollipop sticks if $c=0$.

 \begin{lem}\lbl{3-lolli} Suppose  $\sigma= (a_1, \ldots, a_g, b_1, \ldots, b_g, c_1, \ldots )$ is a small admissible $p$-coloring of type 
 $(c,1)$.
\begin{enumerate}\item[(i)] There are at least two distinct $i \in \{1, \ldots, g\}$ with $a_i>0$.
\item[(ii)] If moreover  $c=0$, then there are at least three distinct $i \in \{1, \ldots, g\}$ with $a_i>0$.
\end{enumerate}
\end{lem}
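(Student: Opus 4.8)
The plan is to extract everything from the admissibility inequalities at the trivalent vertices lying on the ``spine'' of $G_g$. Deleting the $g$ loop edges turns $G_g$ into a tree, and its $g-1$ trivalent vertices that are not lollipop tops form a path $v_1-v_2-\cdots-v_{g-1}$; here $v_1$ meets the sticks of the first two lollipops, each $v_j$ with $2\le j\le g-1$ meets the stick of the $(j+1)$-st lollipop, and $v_{g-1}$ in addition meets the edge to the univalent vertex, which is colored $2c$. I write $2a_m$ for the color of the $m$-th stick and $c_j$ for the color of the spine edge $v_jv_{j+1}$. The single observation that drives the argument is that \emph{whenever a color $0$ occurs at a trivalent vertex, the other two colors there must coincide} (immediate from the triangle inequality). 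In particular the spine color is carried along unchanged past any lollipop whose stick is colored $0$, while ``fusing'' two colors $x$ and $y$ at a vertex (third color $z$, so $|x-y|\le z\le x+y$) can yield $z=0$ only when $x=y$.

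For part (i) I would argue by contraposition: assuming at most one $a_m$ is positive, I show $c+\sum_m a_m$ is even, so that $\sigma$ is not of type $(c,1)$. If all $a_m=0$, carrying the spine color from $v_1$ forces every $c_j=0$ and then, at $v_{g-1}$, forces $2c=0$, so $c+\sum_m a_m=0$. If exactly one $a_m$ is positive, the spine color is $0$ up to the vertex carrying the $m$-th stick and equals $2a_m$ from there to $v_{g-1}$; since the other stick at $v_{g-1}$ is colored $0$, admissibility at $v_{g-1}$ --- whose third color is the marked color $2c$ --- forces $2c=2a_m$, hence $c+\sum_m a_m=2a_m$ is even. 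There are three sub-positions for the unique nonzero stick (one of the two at $v_1$, an interior spine vertex, or the stick at $v_{g-1}$); each reduces to a single triangle inequality and gives the same conclusion.

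For part (ii), assume $c=0$ and, for contradiction, that exactly two of the $a_m$ are positive, say $a_k$ and $a_\ell$. Carrying the spine color along, it is $0$ until the first of the two relevant sticks is reached, it equals (say) $2a_k$ on the stretch of spine joining the two relevant sticks, and at the spine vertex carrying the second relevant stick it is fused with $2a_\ell$; thereafter it is carried along unchanged, and admissibility at $v_{g-1}$ with the marked color $2c=0$ forces this final spine color to be $0$ (or, in the sub-case where the second relevant stick is the one at $v_{g-1}$, directly forces $2a_k=2a_g$; and there is the obvious modification when both relevant sticks sit at $v_1$). Since a fusion of $2a_k$ with $2a_\ell$ can produce $0$ only if $a_k=a_\ell$, we conclude $a_k=a_\ell$, whence $\sum_m a_m=2a_k$ is even, contradicting type $(0,1)$. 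Together with part (i) this gives at least three positive $a_m$.

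The argument is entirely elementary; the only point that needs care is the bookkeeping of where the one or two nonzero sticks sit relative to the two special spine vertices $v_1$ (which carries two sticks) and $v_{g-1}$ (which carries the marked edge), and checking that the listed sub-cases exhaust all possibilities. In each sub-case the decisive step is a single application of the triangle inequality, so I anticipate no genuine difficulty beyond organizing this case check cleanly. (When $g\le 2$ the spine degenerates to a single vertex or is empty; there the relevant colorings either do not exist or the statement is immediate.)
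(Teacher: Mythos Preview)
Your proof is correct and follows the same idea as the paper's: if at most one (resp.\ two) of the $a_i$ are nonzero, admissibility forces $a_{i_0}=c$ (resp.\ $a_{i_0}=a_{i_1}$), so the parity condition for type $(c,1)$ fails. The paper states this in two sentences without justification; your spine-propagation argument is exactly the unpacking of the phrase ``admissibility implies'' that the paper leaves to the reader.
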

\begin{proof} 
(i) If only one of the $a_i$ is non-zero, say $a_{i_0}$, then admissibility implies that $a_{i_0}=c$ and so condition (\ref{type}) in the definition of colorings of type 
$(c,1)$ is violated.

(ii) If moreover $c=0$, and only two of the $a_i$ are non-zero, say $a_{i_0}$ and $a_{i_1}$, then admissibility implies that $a_{i_0}=a_{i_1}$ and so condition (\ref{type}) is again violated.
\end{proof}

\noindent {\bf  Case $\mathrm{III}$.}  Let $\sigma$ be the coloring in the middle of Figure \ref{all}, then 
$$\overline w(\sigma)= (d-1) \sum_{i=1}^{g-2}\overline  \varepsilon_i +(d-2) \overline \varepsilon_{g-1} +(d-c-1)\overline \varepsilon_{g}= (d-c-1) \overline \om_g+  (c-1) \overline \om_{g-1}+\overline \om_{g-2}.$$
We 
shall  
show that $\la = w(\sigma)=(d-c-1) \om_g+  (c-1) \om_{g-1}+ \om_{g-2}.$

 In Step 2, we 
must  
show that $\overline w(\sigma) + \overline \al_i \notin
       W_p(g,c,1)$                                                      
for all $1 \le i \le g$. For $i\leq g-2$ this follows from Lemma \ref{dcoef}. For $i\in\{g-1,g\}$ it is proved by contradiction, as follows.

\medskip\noindent{(Case $i=g-1$.)} Assume that $\overline w(\sigma) + \overline \al_{g-1} \in W_p(g,c,1)$. Then  
there is a  coloring $\si'$ of 
type $(c,1)$ with 
$$\overline w(\si')= \overline w(\sigma) + \overline \al_{g-1}=(d-1) \sum_{i=1}^{g-1} \overline \varepsilon_i +(d-c-2)\overline \varepsilon_g~.$$ 
Note $\si'$ must color all but 
the rightmost lollipop 
with zeros by Lemma \ref{d-1}, and so $\si'$ 
contradicts  
Lemma~\ref{3-lolli}(i).

\medskip\noindent{(Case $i=g$.)} Assume that $\overline w(\sigma)+ \overline \al_{g} \in W_p(g,c,1)$. Then 
there is a  coloring $\si''$ of 
type $(c,1)$ with 
$$\overline w(\si'')= \overline w(\sigma) + \overline \al_{g}= (d-1) \sum_{i=1}^{g-2} \overline \varepsilon_i +(d-2)  \overline \varepsilon_{g-1}+(d-c+1)  \overline  \varepsilon_{g}~.$$ 
By Lemma \ref{d-1} we have $a''_i =0$ for $1\leq i\leq g-2$. By admissibility, it follows that the three colors meeting at the trivalent vertex at the bottom of the rightmost stick are $2a''_{g-1}, 2 a''_g,$ and $2c$. 
Computing the coefficient of $\overline\varepsilon_{g-1} $,
we have $$d-2\equiv d-1-a''_{g-1}-2 b''_{g-1} \pmod{2d}$$  
or $a''_{g-1}+2  b''_{g-1}\equiv 1\pmod{2d}$. By smallness, it follows that  
$a''_{g-1}=1$. This implies two things. First,  since $\varepsilon=1$ 
({\em i.e.,}  
the coloring is odd), it follows that  $a''_{g}\equiv c \pmod{2}$. Second, by the triangle inequality in the admissibility condition at the trivalent vertex at the bottom of the rightmost stick, it follows that  $c-1 \le a''_g \le c+1$.  One concludes that $a''_g=c$.  
The rest of the proof is now the same as in Case $\mathrm{II}$ Step 2.
Computing 
the coefficient of $\overline\varepsilon_{g} $
exactly as was done there (see (\ref{eqg})), we deduce $b''_g\equiv -1\pmod{d}$, hence $b''_g = d-1$ and so $a''_g=0$ by smallness.
This contradicts $a''_g=c>0$.

Thus Step 2 is complete, and we now know that $\overline \la= \overline w(\si)=(d-c-1) \overline \om_g+  (c-1) \overline \om_{g-1}+\overline \om_{g-2}.$

For Step 3, we must  show that $\overline \la - \overline \al_i \notin   W_p(g,c,0)$ whenever the coefficient of $\overline \omega_i$ in $\overline \la$ is zero. For 
$1 \le i \le g-3$, 
this follows from Lemma \ref{dcoef}. 
Then if 
$2\leq c\leq d-2$, Step 3 is already complete, as the coefficient of $\overline \omega_g$, $\overline \omega_{g-1}$, and $\overline \omega_{g-2}$ in $\overline \la$ is non-zero. But if $c=1$, then we also need to show that $\overline \la - \overline \al_{g-1} \notin   W_p(g,1,1)$, and if $c=d-1$, then we also need to show that $\overline \la - \overline \al_{g} \notin   W_p(g,d-1,1)$. The arguments in these two cases will be given below. Note that for $d=2$ (which corresponds to $p=5$)  
we have both  $c=1$ and $c=d-1$, so that  
we need to use both  arguments.

\medskip\noindent{(Case $c=1$.)} Assume for a contradiction that $\overline \la - \overline \al_{g-1} \in   W_p(g,1,1)$. Then  there is a  
coloring $\si'''$ of 
type $(1,1)$ with $$\overline w(\si''')= 
\overline \la -\overline \al_{g-1} =
(d-1) \sum_{i=1}^{g-2} \overline \varepsilon_i +(d-3)\overline\varepsilon_{g-1}
+(d-1) \overline \varepsilon_g~.$$ 
By Lemma \ref{d-1} $\si'''$ must color all but the $(g-1)$st lollipop by zero, which violates Lemma~\ref{3-lolli}(i). This shows that $\overline \la - \overline \al_{g-1} \notin   W_p(g,1,1)$.

\medskip\noindent{(Case $c=d-1$.)}  Assume for a contradiction that
  $\overline \la - \overline \al_{g} \in   W_p(g,d-1,1)$. Then 
there is a  
coloring $\tilde\si$ of 
type $(d-1,1)$ with 
\begin{equation}\lbl{eqtilde}
\overline w(\tilde\si)= 
\overline \la -\overline \al_{g} =
(d-1) \sum_{i=1}^{g-2} \overline \varepsilon_i +(d-2)\overline\varepsilon_{g-1}
-2 \overline \varepsilon_g~.
\end{equation} 
By the exact same reasoning as when showing that $a''_g=c$ for the coloring $\si''$ in Step 2 (Case $i=g$), we have $\tilde a_g=c$. Hence $\tilde a_g=d-1$ (since we assume $c=d-1$) and so $\tilde b_g=0$ by smallness. 
But then the coefficient of $\overline\varepsilon_{g} $ in $\overline w(\tilde\si)$ must be $ d-1-\tilde a_{g} - 2 \tilde b_{g}=0\pmod{2d}$ which contradicts (\ref{eqtilde}).
This shows that $\overline \la - \overline \al_{g} \notin   W_p(g,d-1,1)$.

\medskip
 The proof in Case  $\mathrm{III}$ is now complete.

\medskip

\noindent {\bf  Case $\mathrm{IV}$.}  Let $\sigma$ be the coloring on the right of Figure \ref{all}, then 
$$\overline w( \si) = (d-1) \sum_{i=1}^{g-3} \overline  \varepsilon_i +(d-2) \sum_{i=g-2}^{g} \overline  \varepsilon_i = (d-2)\overline  \om_g+ \overline  \om_{g-3}.$$

 We 
shall  
show that $\la = w(\sigma)=(d-2)\om_g+ \om_{g-3}.$

 In Step 2, we  
must  
show that $\overline w(\sigma) + \overline \al_i \notin W_p(g,c,0)$ for all $1 \le i \le g$. For $i\leq g-3$ and also for $i=g$, this follows from Lemma \ref{dcoef}. For $i=g-2$, it is proved by contradiction, as follows. Assume that $\overline w( \si)  +\overline  \al_{g-2}\in W_p(g,0,1)$. Then there is a coloring $\si'$ of 
type $(0,1)$ with 
$$\overline w(\si')= \overline w(\sigma) + \overline \al_{g-2}=(d-1) \sum_{i=1}^{g-2} \overline \varepsilon_i +(d-3) \overline \varepsilon_{g-1}+(d-2) \overline  \varepsilon_g~.$$ 
Note that  $\si'$ must color all but two of the lollipops  with zeros by Lemma \ref{d-1} 
and so $\si'$ contradicts Lemma~\ref{3-lolli}(ii). This shows that $\overline w( \si)  +\overline  \al_{g-2}\notin W_p(g,0,1)$.
We refer to this  argument as the ``two lollipop argument''.

For $i=g-1$, the proof is 
similar: 
We have that $$\overline w( \si)  + \overline \al_{g-1}= (d-1) \sum_{i=1}^{g-3} \overline \varepsilon_i +(d-2) \overline \varepsilon_{g-2}+(d-1)\overline  \varepsilon_{g-1}+(d-3) \overline \varepsilon_g $$ and the two lollipop argument shows that $\overline w( \si)  + \overline \al_{g-1} \notin W_p(g,0,1)$.

Thus Step 2 is complete, and we now know that $\overline \la= \overline w(\si)=(d-2)\overline  \om_g+ \overline  \om_{g-3}.$

Concerning Step 3, we have that $\overline \la - \overline \al_i \notin   W_p(g,0,1)$ for $i\leq g-4$ by Lemma \ref{dcoef}, and for $i=g-2$ or $g-1$  by the two lollipop argument. Thus Step 3 is complete, except if $d=2$ 
(hence $p=5$) 
in which case we also need to show that 
$\overline \la - \overline \al_g \notin   
      W_5(g,0,1)$, 
 which is easy and left to the reader.

This completes the proof in Case  $\mathrm{IV}$.

\section{Further Comments}\lbl{sec5}

We elaborate on Remark~\ref{1.6}. Let $\overline C_0$ denote the closure of the fundamental alcove. (See {\em e.g.} \cite[3.5]{Hum}.) A dominant weight $\lambda$ lies in $\overline C_0$ iff $$\langle \la + \rho, \beta^\vee \rangle \leq p~,$$ where $\rho$ is sum of the fundamental weights, and $\beta$ is the highest short root (thus $\beta^\vee$ is the highest root of the dual root system).  By \cite[Planche II]{B}, we have  $$\beta^\vee= \alpha_1^\vee + 2 \alpha_2^\vee + \ldots + 2\alpha_g^\vee~.$$ 
Using $\langle \omega_i, \alpha_j^\vee \rangle = \delta_{ij}$, 
one can check that all the weights $\la$ arising in Theorem~\ref{1.1} lie outside of 
$\overline C_0$ 
({\em i.e.,} one has $\langle \la + \rho, \beta^\vee \rangle >p$) except for the weight 
$\la= (d-2) \om_3$
in rank $g=3$ (for which $\langle \la + \rho, \beta^\vee \rangle =p$).  In fact, as soon as the rank $g\geq 5$,  one uniformly  has 
\begin{equation*}\lbl{alc}\langle \la + \rho, \beta^\vee \rangle = p+ 2g -4
\end{equation*}
for all the weights in our list.   
Thus the distance of our weights  to the fundamental alcove
grows linearly with the rank $g$.

On the other hand, Theorem~\ref{1.1} also holds in rank $g=2$ for those weights where it makes sense ({\em cf.} Remark~\ref{17s}(i)). It turns out that those weights $\la$ in rank $g=2$ all lie in $\overline C_0$. Hence the dimension of $L_p(\la)$ is given by the Weyl character formula, as it is    
a well-known consequence of the linkage principle 
that for dominant weights $\la$ in $\overline C_0$, the simple module  $L_p(\la)$ is isomorphic to the Weyl module 
$\Delta_p(\la)$
(see {\em e.g.} \cite[3.6]{Hum}). We have checked that 
indeed
for $g=2$ our dimension formulae (see Appendix~\ref{App-B}.2) agree with the Weyl character formula. 

A further consistency check is possible in rank $g=3$. In this case, 
although our weights (with one exception) lie outside of $\overline C_0$, the distance to $\overline C_0$ is not too big (one has $\langle \la + \rho, \beta^\vee \rangle \leq p+2$) 
and one can use the Jantzen Sum Formula to compute the formal character of $L_p(\lambda)$. (See \cite[II.8]{J} and references therein. See also the summary in \cite[3.9]{Hum}.) Here is the answer in the case $\varepsilon =0.$ We have $$\la =\la_p(3,c,0)= c\, \om_{2} + (d-1-c) \om_3~.$$ One finds that $L_p(\la)$ is equal to the Weyl module 
      $\Delta_p(\la)$
if $c\in\{0,1\}$, but for $c\geq 2$ there is a short exact sequence $$ 0 \rightarrow \Delta_p(\mu)  \rightarrow \Delta_p(\la)  \rightarrow L_p(\la)\rightarrow 0$$ where 
$$\mu = \la - 2 \om_2 = (c-2) \om_{2} + (d-1-c) \om_3~.$$

In particular 
$\dim L_p(\la)$ can be computed from the Weyl character formula as 

$$\dim L_p(\la) = \dim \Delta_p(\la) - \dim \Delta_p(\mu)$$
and we have checked that our dimension formulae (see Appendix~\ref{App-B}.3) agree with this.

In rank $g\geq 4$, we have not attempted to compute $L_p(\la)$ with the Jantzen Sum Formula. Note that it is easy to see that $L_p(\la)$ can only very rarely be equal to the Weyl module $\Delta_p(\la)$ for a weight $\la$ that arises in Theorem~\ref{1.1}, because of the following observations (the first two of which 
imply the third).

\begin{itemize}

\item By \cite[Corollary 2.10]{GM3},  $\dim L_p(\lambda_p(g,c,\varepsilon))$,
 with $g\ge 3$, $c$, and $\varepsilon$ held fixed and viewed as a function of $p$ is polynomial  of degree $3g-3$.
 
 \item By the Weyl character formula \cite[24.20]{FH}, $\dim \Delta_p(\lambda_p(g,c,\varepsilon))$, with $g \ge 1$,  $c$, and $\varepsilon$ held fixed and viewed as a function of $p$ is polynomial  of degree $g(g+1)/2$. 

 \item For each $g\ge 4$, $c$, and $\varepsilon$, there is a integer $N(g,c,\varepsilon)$ such that for all $p\ge  N(g,c,\varepsilon)$,
$\dim L_p(\lambda_p(g,c,\varepsilon))< \dim \Delta_p(\lambda_p(g,c,\varepsilon))$.\

\end{itemize}

\appendix \section{Proof of Lemma~\ref{lemm-app}} \lbl{App-A}

 In this appendix, we assume some familiarity with Integral TQFT, in particular with the results of \cite[\S 3]{GM2} and \cite{GM3}. See Remark~\ref{3.2new} above for the correspondence between our present notations and those in \cite{GM2} and \cite{GM3}. It is shown in \cite[Cor.~2.5]{GM3} that every mapping class $f\in \Gamma_{g,1}$ is represented on $F_p(\Si_g(2c))$ by a matrix of the form 
$$\left( \begin{array}{cc} \star& 0\\
\star&\star\end{array} \right)$$
with respect to the direct sum decomposition (as vector spaces) 
$$F_p(\Si_g(2c)) =  F_p(g,c,0)\oplus F_p(g,c,1)~.$$ (Here, the top left $\star$ stands for an element of $\End_{\BF_p}(F_p(g,c,0))$, and similarly for the two other $\star$s.) Lemma~\ref{lemm-app} is equivalent to the following 
\begin{lem}\lbl{lemm-app-2} Any $f$ in the Torelli group ${\mathcal I}_{g,1}$ is represented by a matrix of the form $$\left( \begin{array}{cc} 1& 0\\
\star&1\end{array} \right)$$
\end{lem}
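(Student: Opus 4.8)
The statement to prove, Lemma~\ref{lemm-app-2}, says that a mapping class $f$ in the Torelli group ${\mathcal I}_{g,1}$ acts on $F_p(\Si_g(2c))$ by a matrix of the form $\left(\begin{smallmatrix} 1&0\\ \star&1\end{smallmatrix}\right)$ with respect to the decomposition $F_p(g,c,0)\oplus F_p(g,c,1)$. We already know from \cite[Cor.~2.5]{GM3} that $f$ acts by a lower-triangular matrix $\left(\begin{smallmatrix} A&0\\ \star&D\end{smallmatrix}\right)$ where $A\in\End(F_p(g,c,0))$ and $D\in\End(F_p(g,c,1))$, so the content is to show that the diagonal blocks $A$ and $D$ are the identity for $f\in{\mathcal I}_{g,1}$. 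The natural strategy is to exhibit, for each of the two quotient/sub-representations $F_p(g,c,0)$ and $F_p(g,c,1)$, an explicit basis on which the Torelli group is seen to act trivially, or to identify these representations with known Torelli-trivial objects coming from Integral TQFT at the prime $p$.

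First I would reduce to showing that ${\mathcal I}_{g,1}$ acts trivially on each factor separately, i.e. that the induced actions on $F_p(g,c,0)$ and $F_p(g,c,1)$ are trivial. For $F_p(g,c,0)$, the key point is that this is the reduction mod $1-\zeta_p$ of the lattice $\BS_p(\Si_g(2c))$ modulo the sublattice spanned by odd colorings; the action of the mapping class group on $\BS_p(\Si_g(2c))$ is, by the constructions of \cite{GM1,GM2,GM3}, an integral refinement of the $\SO(3)$ Witten-Reshetikhin-Turaev representation, and a central fact in that theory (used already in Section~\ref{sec3} to pass to $\Sp(2g,\BF_p)$ via Mennicke's theorem) is that the $p$-th power of a Dehn twist acts as the identity mod $1-\zeta_p$. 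Since the Torelli group is generated by bounding pair maps and separating twists, and since these act on $H_1$ trivially by definition, the plan is to show their action on $F_p(g,c,\varepsilon)$ is already trivial \emph{before} taking $p$-th powers — the Torelli elements act "unipotently" and in fact trivially on the graded pieces. Concretely I would invoke the main structural results of \cite{GM3}: the ``quantum representations at a root of unity'' have the property that Torelli acts by the identity on the semisimplification, and the two factors $F_p(g,c,0)$, $F_p(g,c,1)$ are exactly the composition factors here.

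The cleanest route, and the one I would actually write out, is to use the fact that the $\SO(3)$-TQFT representation at a $p$-th root of unity, after reduction mod $1-\zeta_p$, has the Johnson homomorphism / lower-central-series behaviour controlled by \cite{GM3}: the Torelli group acts through a quotient that is trivial on irreducible subquotients. More precisely, I would argue: (1) The mapping class group action on the \emph{irreducible} $\Sp(2g,\BF_p)$-modules $F_p(g,c,\varepsilon)$ factors through $\Sp(2g,\BF_p)$ — but this is exactly Theorem~\ref{1.7}, whose proof in Section~\ref{sec3} \emph{invokes} Lemma~\ref{lemm-app}, so I must be careful not to argue in a circle. The way out: prove directly, using the skein-theoretic description of the basis vectors $\tilde\fb_\sigma$ in the handlebody (Remark~\ref{3.3}) together with the explicit formulas for the action of Dehn twists on the lollipop basis from \cite[\S 3]{GM2}, that a set of generators of ${\mathcal I}_{g,1}$ (e.g. the Powell generators, or bounding-pair maps $T_aT_b^{-1}$) act by lower-triangular matrices with \emph{identity} diagonal blocks. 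The diagonal block is computed from the leading-order term of the Dehn-twist action modulo $1-\zeta_p$, and the $H_1$-triviality of the Torelli generator forces this leading term to be the identity.

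\textbf{Main obstacle.} The hard part will be making the last step rigorous without circularity: I need a proof that ${\mathcal I}_{g,1}$ acts trivially on the diagonal blocks that does \emph{not} already presuppose that $F_p(g,c,\varepsilon)$ is an $\Sp(2g,\BF_p)$-module. I expect this to come down to a careful analysis, in the lollipop basis and using the explicit Dehn-twist matrices and gluing/factorization axioms of \cite{GM1,GM2}, of how a bounding-pair map acts modulo $1-\zeta_p$ — showing that the "off-diagonal in the coloring" corrections all lie in the odd-coloring subspace (hence contribute only to the $\star$ block) while the diagonal-in-coloring part is trivial because the relevant eigenvalues are $p$-th roots of unity congruent to $1$ mod $1-\zeta_p$. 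Handling the dependence on the genus (reducing the general $g$ case to small genus via the factorization map (\ref{IF2}), exactly as in the proof of Lemma~\ref{elb1}) and checking that the chosen generating set for ${\mathcal I}_{g,1}$ interacts well with this reduction is where the real bookkeeping lies.
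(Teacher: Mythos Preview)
Your plan is not the paper's, and it has a real gap at exactly the point you flag as the ``main obstacle.''

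The paper does \emph{not} work generator-by-generator through Powell's or Johnson's generators of ${\mathcal I}_{g,1}$. Instead it uses the clasper/$Y_1$-surgery characterisation of the Torelli group (Matveev, Goussarov, Habiro; see \cite{Ha,MM,HM,Mat,Gou}): the mapping cylinder of any $f\in{\mathcal I}_{g,1}$ is obtained from $\Sigma_{g,1}\times\I$ by surgery on a disjoint union of copies of a single fixed $6$-component link $Y$ sitting in a genus-$3$ handlebody. Cabling $Y$ by $\omega_p$ gives one universal skein element ${\mathcal Y}_p\in\BS_p(\Sigma_3)$, and the whole lemma reduces to a single computation: ${\mathcal Y}_p\equiv \tilde\fb_{\sigma_0}+\alpha\,\tilde\fb_{(1,1,1,0,0,0)}\pmod{h}$ for some $\alpha$. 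The term $\tilde\fb_{\sigma_0}$ (empty skein) contributes the identity, and each copy of $\tilde\fb_{(1,1,1,0,0,0)}$ is $h^{-1}$ times an elementary tripod, whose three lollipops force (via the lollipop lemma \cite[Thm.~7.1]{GM1}) divisibility by $h$ of any matrix coefficient between colorings of the \emph{same} parity. That is the entire mechanism: one clasper computation in genus $3$, plus the lollipop lemma.

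Your route, by contrast, tries to verify triviality of the diagonal blocks on Torelli generators via explicit Dehn-twist matrices. For a \emph{separating} twist $T_\gamma$ this is fine: factorisation along $\gamma$ diagonalises $T_\gamma$ with eigenvalues powers of $\zeta_p$, hence $\equiv 1\pmod{h}$. But the Torelli group for $g\ge 3$ is \emph{not} generated by separating twists; you need bounding-pair maps $T_aT_b^{-1}$. Here $a$ and $b$ are non-separating curves which are not lollipop curves in general, so neither $T_a$ nor $T_b$ is diagonal in the $\{\hat\fb_\sigma\}$ basis, and your sentence ``the relevant eigenvalues are $p$-th roots of unity congruent to $1$ mod $1-\zeta_p$'' does not apply. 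You could try to diagonalise via factorisation along $a\cup b$, but (as the paper itself warns in Remark~\ref{cauti}) the integral gluing map (\ref{IF2}) is not an isomorphism over $\BZ[\zeta_p]$, and more seriously the even/odd parity decomposition is defined relative to the \emph{original} lollipop tree, so there is no reason the basis adapted to $a\cup b$ should respect it. This is precisely the obstacle you identify but do not overcome; the clasper approach is the missing idea that dissolves it.
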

To prove this, let us review how the coefficients of this matrix can be 
computed. Throughout this appendix we put $h=1-\zeta_p$. Recall that $\BZ[\zeta_p] / (h) = \BF_p$. We use the basis $\{\hat b_\sigma\}$ of $F_p(\Si_g(2c))$; it
is the reduction modulo $h$ of the orthogonal lollipop basis
$\{\tilde b_\sigma\}$ of $\BS_p(\Si_g(2c))$ constructed in
\cite{GM2}. The basis $\{\tilde b_\sigma\}$ is orthogonal with respect
to the Hopf pairing $((\ , \ ))$ defined in \cite[\S
3]{GM2}, and we can therefore compute matrix coefficients by pairing with the dual basis  $\{\tilde
b_{\sigma}^\star\}$ satisfying  $$((\tilde b_\sigma,\tilde
b_{\sigma'}^\star))= \delta_{\sigma, \sigma'}~.$$
Here $\tilde
b_{\sigma}^\star$ lies in the dual lattice $\BS_p^\sharp(\Si_g(2c)) \subset \BS_p(\Si_g(2c)) \otimes \BQ(\zeta_p)$. 
We will use that 
\begin{equation}\lbl{dl}
\tilde
b_{\sigma}^\star \sim \tilde b_\sigma^\sharp ~,
\end{equation} where $\tilde b_\sigma^\sharp$ is defined in \cite[Cor. 3.4]{GM2} to be a certain power of $h$ times $\tilde b_\sigma$, and $\sim$ means
equality up to multiplication by a unit in $\BZ[\zeta_p]$.  (The power of $h$ depends on $\sigma$.) 

The Hopf pairing is a symmetric $\BZ[\zeta_p]$-valued form on $\BS_p(\Si_g(2c))$ which depends on a choice of Heegaard splitting of $S^3$. Given $x,y \in \BS_p(\Si_g(2c))$, their Hopf pairing is computed skein-theoretically as follows.  Think of $x$ and $y$ as represented by skein elements in the handlebody $\BH_g$. Pick a complementary handlebody $\BH'_g$ such that $$\BH_g\cup_{\Sigma_g}
\BH'_g = S^3~. $$ Then 
\begin{equation}\lbl{hdlb} ((x,y))= \langle x \cup r(y) \rangle~,\end{equation}  where the right hand side of (\ref{hdlb}) is the Kauffman bracket evaluation (at a certain square root of $\zeta_p$) of the skein element in $S^3$ obtained as the union of $x$ in $\BH_g$ and $r(y)$ in $\BH'_g$ where $r$ is a certain identification of $\BH_g$ with $\BH'_g$ (see \cite[\S 3]{GM2} for a more precise definition). 

 Given now a mapping class $f$, the $(\sigma, \sigma')$ matrix coefficient of $f$ acting on $\BS_p(\Si_g(2c))$ is computed skein-theoretically as the evaluation
\begin{equation}\lbl{mco}
\langle \tilde b_\sigma \cup s \cup r( \tilde
b_{\sigma'}^\star)\rangle
\end{equation} in $$S^3=\BH_g\cup_{\Sigma_g} (\Sigma_g \times \I)\cup_{\Sigma_g}
\BH'_g~,$$  where $s$ is a certain skein element in $\Sigma_g(2c) \times \I$ obtained 
from $f$
in the usual way: there is a banded link $L$ in $\Sigma_{g,1} \times \I$ so that surgery on $L$ gives the mapping cylinder 
of $f$;  one then obtains $s$ by replacing each component of $L$ by a certain skein element $\omega_p$ (see \cite[p.~898]{BHMV}) and placing the resulting skein element in $\Sigma_{g,1} \times \I$ into $\Sigma_g(2c) \times \I$ in the standard way. Here $L$ and $s$ are not uniquely determined by $f$, but it is shown in \cite{BHMV} that this procedure is well-defined, and gives the correct answer. (More precisely, it gives the correct answer up to multiplication by a global projective factor which is a power of $\zeta_p$. Here, we can safely ignore this projective ambiguity as we are eventually interested in the matrix of $f$ modulo $h=1-\zeta_p$ only.) 

We are now ready to prove Lemma~\ref{lemm-app-2}. The main idea is that if $f$ lies in the Torelli group ${\mathcal I}_{g,1}$, then $L$ and hence $s$ are 
very 
special, because the mapping cylinder of $f$ can be obtained by $Y_1$-surgery on $\Sigma_{g,1}\times \I$ \cite{Ha,MM,HM}. The notion of $Y_1$-surgery goes back to Matveev \cite{Mat} (who called it Borromean surgery), and then Goussarov \cite{Gou} and Habiro \cite{Ha} (who called it clasper surgery). We refer the reader to \S 5 of the survey \cite{HM} for a good introduction to $Y_1$-surgery and also for more references to the original papers. 

The result we need is stated in \cite[Prop. 5.5]{HM} and can be formulated as follows. There is a certain 6-component banded link $Y$ in a genus 3 handlebody with the following property. For every $f\in {\mathcal I}_{g,1}$, there exists an embedding of a finite disjoint union of, say, $n$ copies of the pair $(\BH_3, Y)$ into $\Sigma_{g,1}\times \I$, giving rise to a $6n$-component banded link $L$ in $\Sigma_{g,1}\times \I$ such that the mapping cylinder of $f$ is obtained by surgery on this banded link $L$.

Cabling each component of $Y$ by $\omega_p$ gives rise to a skein element ${\mathcal Y}_p$ in $\BH_3$, and the skein element $s$ appearing in our computation of matrix coefficients (see (\ref{mco})) is obtained by placing $n$ copies of ${\mathcal Y}_p$ into $\Sigma_g(2c)\times \I$. 

We can view ${\mathcal Y}_p$ as an element of the free $\BZ[\zeta_p]$-module $\BS_p(\Si_3)$. The orthogonal lollipop basis $\{\tilde b_\sigma\}$ of $\BS_p(\Si_3)$ is indexed by colorings of the form $\sigma = (a_1,a_2,a_3, b_1,b_2,b_3)$. 
As before, let $\sigma_0$ denote the zero coloring. The following lemma is the key to proving Lemma~\ref{lemm-app-2}, as it  shows that all but two coefficients of ${\mathcal Y}_p$ in the $\{\tilde b_\sigma\}$ basis are divisible by $h$. 
\begin{lem} \lbl{Yp} ${\mathcal Y}_p - \tilde b_{\sigma_0} = \alpha \, \tilde b_{(1,1,1,0,0,0)} \pmod h$ for some $\alpha\in \BZ[\zeta_p]$.    
\end{lem}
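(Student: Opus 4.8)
The plan is to compute the expansion of $\mathcal{Y}_p$ in the orthogonal lollipop basis $\{\tilde b_\sigma\}$ of $\BS_p(\Si_3)$ modulo $h = 1-\zeta_p$, and show that all coefficients except those of $\tilde b_{\sigma_0}$ and $\tilde b_{(1,1,1,0,0,0)}$ vanish mod $h$. First I would use orthogonality of the lollipop basis with respect to the Hopf pairing: the coefficient of $\tilde b_\sigma$ in $\mathcal{Y}_p$ is $((\mathcal{Y}_p, \tilde b_\sigma^\star))/((\tilde b_\sigma, \tilde b_\sigma))$, and by (\ref{dl}) we have $\tilde b_\sigma^\star \sim \tilde b_\sigma^\sharp$, where $\tilde b_\sigma^\sharp$ is a known power of $h$ (say $h^{e(\sigma)}$) times $\tilde b_\sigma$. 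Thus I need to understand $((\mathcal{Y}_p, \tilde b_\sigma))$ and compare its $h$-adic valuation to that of $((\tilde b_\sigma,\tilde b_\sigma)) / h^{e(\sigma)}$; the claim is that the quotient has positive $h$-valuation unless $\sigma \in \{\sigma_0, (1,1,1,0,0,0)\}$.

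The key input for the numerator is the structure of the clasper/$Y$-graph skein element $\mathcal{Y}_p$: cabling the $6$-component link $Y$ in $\BH_3$ by $\omega_p$ produces a skein element whose expansion in the lollipop basis has coefficients controlled by the degree-filtration coming from the theory of finite-type invariants and the Habiro-style evaluation of $\omega_p$-cabled claspers. The crucial fact — which is exactly the reason $Y_1$-surgery is the right tool — is that a single $Y$-graph raises the ``degree'' by one, so $\mathcal{Y}_p - \tilde b_{\sigma_0}$ lies in the first piece of this filtration; this forces most basis coefficients to be divisible by $h$, with the leading term being detected by the coloring $(1,1,1,0,0,0)$ corresponding to putting the smallest nonzero admissible colors on the three loops of the genus-$3$ handlebody and zero on the sticks. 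Concretely, I would argue: (i) $\mathcal{Y}_p \equiv \tilde b_{\sigma_0} \pmod h$ up to first order, because the empty-link term dominates and all higher terms come with a factor of $h$; (ii) among the order-$h$ terms, a valuation count using the explicit power of $h$ in $\tilde b_\sigma^\sharp$ from \cite[Cor. 3.4]{GM2} and the divisibility properties of the Hopf pairing values $((\mathcal{Y}_p,\tilde b_\sigma))$ shows that the only coloring whose coefficient is a unit times $h$ (rather than divisible by $h^2$) is $(1,1,1,0,0,0)$.

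The main obstacle I anticipate is step (ii): making the valuation bookkeeping precise. One must control, for each admissible coloring $\sigma$ of the genus-$3$ lollipop graph, both the $h$-adic valuation of $((\mathcal{Y}_p, \tilde b_\sigma))$ — which requires knowing how the $\omega_p$-cabled $Y$-graph pairs against a colored handlebody, presumably via a Jones–Wenzl / $6j$-symbol computation evaluated at a $p$-th root of unity — and the valuation of the normalization $((\tilde b_\sigma,\tilde b_\sigma))/h^{e(\sigma)}$. The asymmetry between these two valuations is what singles out $(1,1,1,0,0,0)$, and getting the inequality to be strict for every other $\sigma$ (there are finitely many, since smallness bounds the loop colors by $d-1$) is where the real work lies. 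I expect this to reduce to a finite check once the general divisibility estimate is set up, using that $1-\zeta_p^k$ is a unit times $h$ for $1 \le k \le p-1$ and that the relevant quantum integers $[n]$ have $h$-valuation zero in the range dictated by smallness. A convenient shortcut, if available, is to invoke the known relation between $Y$-graph surgery and the Johnson homomorphism / lower central series of the Torelli group together with the fact that the associated-graded of the lollipop filtration is very small in low degree, so that only the ``symmetric'' term on all three loops survives.
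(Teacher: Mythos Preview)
Your outline starts at the right place --- writing the coefficient of $\tilde b_\sigma$ in $\mathcal{Y}_p$ as the Hopf pairing $((\mathcal{Y}_p,\tilde b_\sigma^\star))$ (note: no division by $((\tilde b_\sigma,\tilde b_\sigma))$ is needed, since $\tilde b_\sigma^\star$ is already the dual basis). But after that the proposal has two genuine gaps.

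First, you are aiming at a stronger statement than the lemma. You do not need to show that the coefficient of $\tilde b_{(1,1,1,0,0,0)}$ is ``a unit times $h$'' while all other nonzero coefficients are in $h^2$; the lemma only asserts that modulo $h$ the expansion of $\mathcal{Y}_p$ has a $1$ in the $\sigma_0$-slot, something (called $\alpha$, with no constraint) in the $(1,1,1,0,0,0)$-slot, and $0$ elsewhere. So no second-order analysis is needed. Your step (i) as stated (``$\mathcal{Y}_p\equiv \tilde b_{\sigma_0}\pmod h$ because the empty-link term dominates'') is both stronger than required and unjustified: $\omega_p$ is not of the form ``empty link plus $h\cdot(\text{stuff})$'', so this heuristic does not go through.

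Second, and more seriously, you are missing the computational device that makes the valuation estimate tractable. The paper does not attack $((\mathcal{Y}_p,\tilde b_\sigma^\star))$ directly via $6j$-symbols or a finite-type filtration. Instead it uses the handle-slide property of $\omega_p$ to perform the surgery on the six-component link $Y$: this returns $S^3$ but re-embeds the complementary handlebody $\BH_3'$ so that its three handles are linked as Borromean rings. The coefficient then becomes simply $\langle \varphi(r(\tilde b_\sigma^\star))\rangle$, an evaluation of a colored skein in $S^3$ with no $\omega_p$'s left. From there, a short genus-two Hopf-pairing argument (Lemma~\ref{Yp2}) gives the divisibility $h^{\lfloor (E_1+E_2-E_3)/2\rfloor}$ with $E_i=a_i+2b_i$, which immediately kills all colorings with some $b_i>0$ or some $a_i>1$. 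The remaining finitely many colorings (all $b_i=0$, each $a_i\in\{0,1\}$) are handled topologically: if some $a_i=0$ then two of the Borromean handles are unlinked and the evaluation vanishes outright; if all $a_i=1$ there is nothing to prove since that is the allowed $\alpha$-term; and $\sigma_0$ gives the empty link, evaluating to $1$. None of this is visible from the finite-type/Johnson-homomorphism intuition you invoke, and without the surgery trick the ``valuation bookkeeping'' you anticipate as the main obstacle would indeed remain an obstacle.

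A minor point: the coloring $(1,1,1,0,0,0)$ has $a_i=1$, so the stick colors are $2a_i=2$, not zero as you wrote.
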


\begin{proof}[Proof of Lemma~\ref{lemm-app-2} from Lemma~\ref{Yp}] We have that $\tilde b_{\sigma_0}$ is represented by the empty link and $\tilde b_{(1,1,1,0,0,0)}$ is $h^{-1}$ times the elementary tripod (see \cite[Fig.~2 on p.~824]{GM1}). Thus $$s=\sum_{k=0}^n h^{-k}s_k~,$$ where $s_k$ is the disjoint union of $k$ elementary tripods embedded in $\Sigma_g(2c)\times\I$. The contribution of $h^{-k} s_k$ to the $(\sigma, \sigma')$ matrix coefficient of $f$ is \begin{equation} \lbl{mco2}
h^{-k} \langle \tilde b_\sigma \cup s_k \cup r( \tilde
b_{\sigma'}^\star)\rangle~.
\end{equation} For $k=0$ this is $\delta_{\sigma, \sigma'}$, as $s_0$ is the empty link. For $k>0$, a straightforward application of the lollipop lemma \cite[Thm.~7.1]{GM1} shows that if $\sigma$ and $\sigma'$ have the same parity, then (\ref{mco2}) is divisible by $h$ (since a tripod has $3$ elementary lollipops). This proves Lemma~\ref{lemm-app-2}.
\end{proof}

\begin{proof}[Proof of Lemma~\ref{Yp}] The 6-component banded link $Y$
  in $\BH_3$ can be described as follows (see \cite[Fig. 5]{HM}). We
  can number the components of $Y$ as $Y_1,Y_2,Y_3,Y_1',Y_2',Y_3'$
  such that the following holds.  The components $(Y_1,Y_2,Y_3)$ lie
  in a ball $B \subset \BH_3$ and form zero-framed Borromean
  rings. For $i=1,2,3$, the component $Y'_i$ is a zero-framed unknot
  going once around the $i$th `hole' of $\BH_3$; moreover $Y'_i$ is
  linked exactly once with $Y_i$, and unlinked with the two other
  $Y_j$s. 

Recall that ${\mathcal Y}_p$ is obtained by cabling each of
  these 6 components by $\omega_p$. For the lemma, we need to compute the coefficients of the basis
  vectors $\tilde b_{(a_1,a_2,a_3, b_1,b_2,b_3)}$ in ${\mathcal Y}_p$. (In fact, except for the zero coloring, we only need these coefficients modulo $h$.)  Such a coefficient is given by the Hopf pairing $$(({\mathcal Y}_p, \tilde b^\star_{(a_1,a_2,a_3,
  b_1,b_2,b_3)}))~.$$ As explained above, this is computed as the Kauffman bracket 
  of a certain skein element in $S^3$. Using now the handle
  slide property of $\omega_p$ (which is at the basis of the
  skein-theoretic construction of TQFT \cite{BHMV}), we can compute this by
  performing surgery on $Y$, which gives back $S^3$, but transforms
  the standard embedding of $\BH'_3$ in $S^3$ into an embedding
  $\varphi$ of $\BH'_3$ in $S^3$ where the three handles of
  $\varphi(\BH'_3)$ are linked as in the Borromean rings. Thus the
  coefficient of $\tilde b_{(a_1,a_2,a_3, b_1,b_2,b_3)}$ in ${\mathcal
    Y}_p$ is the Kauffman bracket evaluation 
\begin{equation}\lbl{cov} \langle \varphi(r(\tilde b^\star_{(a_1,a_2,a_3,
  b_1,b_2,b_3)}))\rangle~.
\end{equation}  For the zero coloring, the basis
  vector $\tilde b^\star_{\sigma_0}$ is represented by the empty
  link, and so (\ref{cov}) evaluates to $1$, as asserted. The following Lemma~\ref{Yp2} shows that (\ref{cov})  
  is divisible by $h$ as soon as $\max(b_i)>0$ or
  $\max(a_i)>1$. Thus it only remains to compute (\ref{cov}) for the colorings with $\max(a_i)=1$ and all $b_j=0$. Assume all $b_j=0$. For $a_1=a_2=a_3=1$, there is nothing to prove, while if one of the $a_i$ is zero, then (\ref{cov}) evaluates to zero (as the two remaining handles are unlinked).  This proves the lemma.
\end{proof} 

For $r\in\BQ$, we define its floor $\lfloor r\rfloor$ to be the largest integer $\leq r$, and its roof $\lceil r\rceil$ to be the smallest integer $\geq r$.  

\begin{lem} \lbl{Yp2} We have that (\ref{cov}) is divisible by $$h^{\lfloor (E_1 + E_2 -E_3)/2\rfloor}$$ where $E_i=a_i+2 b_i$ ($i=1,2,3$), and w. l. o. g. we may assume $E_1 \geq E_2 \geq E_3$. 
\end{lem}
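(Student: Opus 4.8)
The plan is to compute the Kauffman bracket evaluation (\ref{cov}) by exploiting the linking pattern of $\varphi(\BH'_3)$, which is that of Borromean rings, together with the behavior of the dual basis vector $r(\tilde b^\star_\sigma)$ under cabling. First I would recall from (\ref{dl}) that $\tilde b^\star_\sigma \sim \tilde b^\sharp_\sigma$, which is a specific power of $h=1-\zeta_p$ times $\tilde b_\sigma$; so up to a unit, (\ref{cov}) equals that power of $h$ times the Kauffman bracket of $\varphi(r(\tilde b_\sigma))$, i.e.\ the closure in $S^3$ of the lollipop-basis skein element colored by $\sigma=(a_1,a_2,a_3,b_1,b_2,b_3)$, but embedded via the ``Borromean'' embedding of the genus-$3$ handlebody. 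The heart of the matter is therefore a divisibility estimate for this Borromean-embedded colored graph evaluation, where the stick edges carry colors $2a_i$ and the loop edges carry colors $a_i+b_i$, and $E_i = a_i + 2b_i$ records (essentially) twice the maximal ``through-flow'' of color along the $i$th handle.

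The key steps, in order: (1) Reduce (\ref{cov}) to a colored Borromean-rings-type evaluation by passing the dual basis to the ordinary lollipop basis via (\ref{dl}), keeping careful track of the power of $h$ that this costs; (2) resolve the loops of the lollipop graph, so that the $i$th handle of $\varphi(\BH'_3)$ effectively carries a cable whose color ranges over admissible values controlled by $E_i = a_i+2b_i$ (this is where the quantity $E_i$, rather than just $a_i$, enters); (3) invoke the standard fact about quantum invariants of Borromean rings / Milnor-type configurations that the Kauffman bracket of the Borromean rings colored by $e_1,e_2,e_3$ is divisible by a power of $h$ that grows like the smallest of the three ``pairwise sums'' $\lfloor (e_1+e_2-e_3)/2\rfloor$ after ordering $e_1\ge e_2\ge e_3$; this should follow either from an explicit formula for the colored Borromean evaluation (which vanishes at $q$ a root of unity to the appropriate order) or from the fact that a zero-framed unknot cabled by $\omega_p$ acts as a projector and the Borromean linking forces a ``triangle inequality'' defect measured precisely by $E_1+E_2-E_3$; (4) combine the exponent from step (1) with the exponent from step (3) and check that the total is at least $\lfloor (E_1+E_2-E_3)/2\rfloor$, reordering the indices so that $E_1\ge E_2\ge E_3$ as allowed by the symmetry of the Borromean rings.

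The main obstacle I expect is step (3): pinning down the exact order of vanishing (in powers of $h$) of the colored Borromean-rings evaluation at $\zeta_p$, uniformly in the three colors, and matching it cleanly against the stated bound $\lfloor(E_1+E_2-E_3)/2\rfloor$. The cleanest route is probably to use a known closed formula for $\langle \text{Borromean rings colored } e_1,e_2,e_3\rangle$ (for instance via the $6j$-symbol / tetrahedral network expansion, since the Borromean rings bound an ``obvious'' spanning surface making them expressible through a single $\Theta$- and tetrahedral-coefficient), read off the factors of the form $(q^{a}-q^{-a})$ appearing in its numerator, and count how many of them vanish at a primitive $p$th root of unity $\zeta_p$; the constraint $E_1+E_2-E_3 \le $ the relevant sum forces enough such factors to be present. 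I would also need to be careful that cabling by $\omega_p$ rather than by a single Jones--Wenzl idempotent does not lose divisibility — but since $\omega_p$ is a positive combination of the first $p-1$ idempotents, it suffices to bound each idempotent-colored term, and the worst case is exactly the one already accounted for by $E_i$. A secondary, more bookkeeping-type difficulty is keeping the power-of-$h$ accounting in step (1) consistent with the normalization of $\tilde b^\sharp_\sigma$ in \cite[Cor.~3.4]{GM2}; this is routine but must be done with care since the final bound is an \emph{exact} floor, not merely ``some positive power''.
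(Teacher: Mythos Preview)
Your overall structure matches the paper's: both begin by clearing denominators via (\ref{dl}), defining the integral element $B_\sigma = h^{\lceil(E_1+E_2+E_3)/2\rceil}\,\tilde b^\star_\sigma$, so that the task becomes showing the evaluation of $B_\sigma$ in the Borromean embedding is divisible by $h^{E_1+E_2}$. The difference is entirely in how that last divisibility is obtained.

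You propose (step 3) to invoke or derive a divisibility estimate for the \emph{colored Borromean rings} evaluation directly, via $6j$-symbols or an explicit closed formula, and you correctly flag this as the main obstacle. The paper sidesteps this computation entirely by exploiting the \emph{Brunnian} property of the Borromean rings: removing the third component unlinks the other two. Concretely, one cuts $B_\sigma$ at the midpoint of the edge labelled $2a_3$; the piece containing the first two lollipops then sits in a \emph{standardly embedded} genus-$2$ handlebody (since those two handles are now unlinked), where it is recognizable as a power of $h$ times the orthogonal lollipop basis element $\tilde b^\star_{(a_1,a_2,b_1,b_2,2a_3)}$ of $\BS_p(\Sigma_2(2a_3))$. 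The whole evaluation is therefore a genus-$2$ Hopf pairing of this element against an integral skein element, and the divisibility by $h^{E_1+E_2}$ drops out immediately from the known diagonal form of the Hopf pairing in the orthogonal lollipop basis \cite[\S 3]{GM2}. No Borromean-ring formula, tetrahedral network, or term-by-term cable analysis is needed.

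Two smaller points: your remark about ``cabling by $\omega_p$ rather than a single idempotent'' is a red herring here, since $\omega_p$ has already been absorbed in passing from $\mathcal Y_p$ to the expression (\ref{cov}); and your step (2) (``resolve the loops'') is not needed in the paper's argument, because the reduction to genus two keeps the lollipop structure intact rather than expanding into cabled components.
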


\begin{proof} Put $$B_{(a_1,a_2,a_3,
  b_1,b_2,b_3)} = h^{\lceil(E_1+E_2+E_3)/2\rceil}\,  \tilde b^\star_{(a_1,a_2,a_3,
  b_1,b_2,b_3)}~.$$ Then  $B_{(a_1,a_2,a_3,
  b_1,b_2,b_3)}$ is represented by a linear combination of skein elements in $\BH_3$ with coefficients in $\BZ[\zeta_p]$. (This follows from (\ref{dl}) and the definition of  $\{\tilde b_\sigma^\sharp \}$ in \cite[\S 3]{GM2}.) Below, we say that such a skein element has no denominators. 
Let $\beta$ denote the result of evaluating (\ref{cov}) with $B_{(a_1,a_2,a_3,
  b_1,b_2,b_3)}$ in place of $\tilde b^\star_{(a_1,a_2,a_3,
  b_1,b_2,b_3)}$. Since the three lollipops are linked as in the Borromean rings, we can pull the first two of them apart and consider the third as just some skein element in the complement of the first two. In other words, by cutting $B_{(a_1,a_2,a_3,
  b_1,b_2,b_3)}$ at the midpoint of the edge labelled $2a_3$, we can 
write $\beta$  as the genus two Hopf pairing of two skein elements, say $\frak s$ and $\frak s'$, in a genus $2$ handlebody with one banded point labelled $2a_3$. Moreover, one of these two skein elements, say $\frak s$ (namely the one containing the first two lollipops), is a power of $h$ times the basis element $\tilde b^\star_{(a_1,a_2,
  b_1,b_2,2a_3)}$ of the orthogonal lollipop basis of $\BS_p(\Sigma_2(2a_3))$. Rewriting $\frak s'$ also in this basis and using the formulae in \cite[\S 3]{GM2} for the Hopf pairing in the orthogonal lollipop basis, one finds that $\beta$ is divisible by $h^{E_1 +E_2}$. (Here it is important to observe that both $\frak s$ and $\frak s'$ are skein elements without denominators.) Thus (\ref{cov}) is divisible by $$h^{-\lceil(E_1+E_2+E_3)/2\rceil + E_1 +E_2}= h^{\lfloor (E_1 + E_2 -E_3)/2\rfloor}~.$$ This completes the proof.
\end{proof}

\begin{rem}{\normalfont Other applications of Lemma~\ref{Yp2} include a skein-theoretic construction of Ohtsuki's power series invariant for integral homology 3-spheres, and a Torelli group representation inducing this invariant \cite{M}.
}\end{rem}

     \section{Some polynomial formulae for  dimensions} \lbl{App-B}
 
 In \cite[Prop. 7.7, Prop. 7.8]{GM3} we gave residue formulae for  $D_g^{(2c)}(p)$ and  $ \delta_g^{(2c)}(p)$ valid for $g \ge 1$.  
Using Equation  (\ref{diff}), one can then express  $\dim  L_p(\la)$, for the $\la$ that arise in Theorem \ref{1.1}, 
for specified $g$ as polynomials in $p$ and $c$ using mathematical software. 
Below we write down these polynomials in rank $g=2$, $3$, and $4$.
These formulae hold for 
$p\geq 5$ and
 $1 \le c \le d-1$ where $d=(p-1)/2$.
The first polynomial in each rank is the second polynomial with $c$ set to zero. Similarly the fourth polynomial in each rank (except rank 2) is the third polynomial with $c$ set to zero. As noted in Section~\ref{sec5}, our formulae in rank $g=2$ and also the first one in rank $g=3$ agree with Weyl's character formula.
 The second formula in rank $g=3$ agrees  with Weyl's character formula for $c=1$, but not for $c>1$. \medskip 
  
\addtocounter{subsection}{1}
\subsection{Rank $g=2$}  

    \begin{align*}  
    \dim   L_p\big ( (d-1) \om_2 \big)
    = \frac{1}{24}  (p-1)p(p+1) \hspace{10cm}
      \end{align*}

  \begin{align*} 
    \dim  L_p \big( (d-c-1) \om_2 +c\, \om_1 \big)
    = \frac{1}{24} (c+1) (p+1) (p-2 c-1) (p-c) 
    \hspace{10cm}
      \end{align*}

   \begin{align*} 
  \dim  L_p \big ( (d-c-1) \om_2 +(c-1) \om_1\big ) 
  = \frac{1}{24} c (p-1) (p-2 c-1)
   (p-c-1) 
   \hspace{10cm}
     \end{align*}

 \subsection{Rank $g=3$}

  \begin{align*}  
\dim  L_p \big ( (d-1) \om_3 \big )
  =  \frac{1}{2880}(p-1) p (p+1)^2 (p+2) (p+3) \hspace*{10cm} 
     \end{align*}

\begin{align*}  
\hspace{-.1in} \dim L_p &\big( (d-c-1)  \om_3  +c\ \om_2 \big ) = 
   \frac{1} {2880} (p-2 c-1)
 \Big (p^5 (2 c+1)+p^4 (4 c^2+4 c+7)+ \\& p^3 (-12 c^3-18 c^2+28 c+17)+p^2 (6 c^4+12 c^3-22 c^2-28 c+17)+ \\& 6 p (-2 c^3-3 c^2+c+1)+6 c (c^3+2 c^2-c-2) \Big )
    \end{align*}

     \begin{align*} 
    \dim L_p &\big ( (d-c-1) \om_3 +(c-1) \om_2 +\om_1\big )= 
    \frac{1} {2880} (p-1) (p+1)(p-2 c-1)  \Big  ( p^3 (2 c+1)+ \\& p^2 (4 c^2+4 c-5)+ 6 p (-2 c^3-3 c^2+c+1)+6 c (c^3+2 c^2-c-2) \Big )
    \end{align*}

 \begin{align*} 
    \dim  L_p \big( (d-2) \om_3 \big )=
    \frac{1}{2880}(p-3) (p-2) (p-1)^2 p (p+1)
    \hspace{10cm}
      \end{align*}

 \subsection{Rank $g=4$}

   \begin{align*} 
  \dim  L_p \big ( (d-1) \om_4\big ) =   \frac{1}{120960}(p-1) p (p+1) \left(p^6+37 p^4+142 p^2+36\right)
  \hspace{10cm}
    \end{align*}
  
 \begin{align*} 
 \dim L_p &\big( (d-c-1) \om_4  +c\  \om_3 \big )  =
   \frac 1 {120960} (p+1)(p-2c-1) \Big (p^7 (2 c+1)+2 p^6 c (2 c+1)+ \\& p^5 (-6 c^3-13 c^2+18 c+37)+2 p^4 c (-6 c^3-9 c^2+22 c+38)+\\&p^3 (18 c^5+57 c^4-84 c^3-266 c^2+22 c+142)-6 p^2 c (c^5+6 c^4+c^3-28 c^2-12 c+24)+\\&3 p (2 c^6+12 c^5+5 c^4-50 c^3-37 c^2+32 c+12)-6 c (c^5+3 c^4-5 c^3-15 c^2+4 c+12) \Big )  
   \end{align*}

   \begin{align*}
    \dim L_p &\big ((d-c-1) \om_4 + (c-1)\om_3+ \om_2 \big ) =
  \frac 1 {120960} (p-1)(p-2c-1)   \Big (p^7 (2 c+1)+ \\& p^6 (4 c^2+6 c+2)+p^5 (-6 c^3-5 c^2+26 c-12)-2 p^4 (6 c^4+15 c^3-13 c^2-9 c+13)+ \\& p^3 (18 c^5+33 c^4-132 c^3-148 c^2+164 c+23)-6 p^2 (c^6-14 c^4-8 c^3+33 c^2+16 c-12)+ \\& p (-6 c^6+75 c^4+30 c^3-159 c^2-48 c+36)-6 c (c^5+3 c^4-5 c^3-15 c^2+4 c+12) \Big )    \end{align*}
   
 \begin{align*}  
   \dim  L_p \big( (d-2) \om_4 + \om_1 \big ) = \frac{1}{120960}(p-3) (p-2) (p-1)^2 p (p+1)^2 (p+2) (p+3)
     \hspace{10cm}
     \end{align*}

\end{document}